\numberwithin{equation}{section}
\theoremstyle{plain}
\newtheorem{lemma}{Lemma}[section]
\newtheorem{theorem}{Theorem}[section]
\newtheorem{corollary}{Corollary}[section]
\newcommand{\bu}{{\mathbf u}}
\newcommand{\T}{{\mathbf T}}
\newcommand{\A}{{\mathbf A}}
\newcommand{\B}{{\mathbf B}}
\newcommand{\C}{{\mathbf C}}
\newcommand{\D}{{\mathbf D}}
\newcommand{\bS}{{\mathbf S}}
\newcommand{\bx}{{\mathbf x}}
\newcommand{\y}{{\mathbf y}}
\newcommand{\z}{{\mathbf z}}
\newcommand{\br}{{\mathbf r}}
\newcommand\tx{\widetilde{\bx}}
\newcommand{\CC}{\mathbb{C}}
\newcommand{\um}{\underline{m}}
\newcommand{\uS}{\underline{S}}
\newcommand{\uT}{\underline{T}}
\newcommand{\de}{\delta}
\newcommand{\E}{{\rm E}}
\newcommand{\Cov}{{\rm Cov}}
\newcommand{\bL}{{\mathbf L}}
\newcommand{\bV}{{\mathbf V}}
\newcommand{\bM}{{\mathbf M}}
\newcommand{\bR}{{\mathbf R}}
\newcommand{\R}{{\tilde {\mathbf R}}}
\newcommand{\tr}{{\text{\rm tr}}}
\newcommand{\btheta}{{\boldsymbol \theta}}
\newcommand{\bbeta}{{\boldsymbol \beta}}
\newcommand{\bgamma}{{\boldsymbol \gamma}}
\begin{document}
\title[High-dimensional spatial-sign covariance matrix]{On spectral properties of high-dimensional spatial-sign covariance matrices in elliptical distributions with  applications}

\author{Weiming Li  \quad Wang Zhou}
\date{}
\thanks{Li's work was partially supported by National Natural Science Foundation of China, No.\ 11401037 and Program of IRTSHUFE.Zhou's work was partially supported by the MOE Tier 2 grant MOE2015-T2-2-039 (R-155-000-171-112)  at the National University of Singapore.}

\address{School of Statistics and Management, Shanghai University of Finance and Economics, Guoding Road No.\ 777, Shanghai, 200433, China.}
\email{li.weiming@shufe.edu.cn}

\address{Department of Statistics and Applied Probability, National University of Singapore, Singapore}
\email{stazw@nus.edu.sg}

\subjclass[2010]{Primary 62H10; Secondary 62H15}

\keywords{Spatial-sign, Covariance matrix, High-dimensional data, Elliptical distribution.}

\maketitle

\begin{abstract}
Spatial-sign covariance matrix (SSCM) is an important substitute  of sample covariance matrix (SCM) in robust statistics.
This paper investigates the SSCM on its asymptotic spectral behaviors under high-dimensional elliptical populations, where both the dimension $p$ of observations and the sample size $n$ tend to infinity with their ratio $p/n\to c\in (0, \infty)$. The empirical spectral distribution of this nonparametric scatter matrix is shown to converge in distribution to a generalized Mar\v{c}enko-Pastur law. Beyond this, a new central limit theorem (CLT) for general linear spectral statistics of the SSCM is also established. For polynomial spectral statistics, explicit formulae of the limiting mean and covarance functions in the CLT are provided. The derived results are then applied to an estimation procedure and a test procedure for the spectrum of the shape component of population covariance matrices.
\end{abstract}

\section{Introduction}
Elliptical family of distributions, originally introduced in \cite{K70}, is an important extension of the multivariate normal distribution and has been broadly applied in biology, finance and economics, signal and image processing, etc. \citep{FZ90,G13}. A random vector $\bx$ with zero mean is said to be elliptically distributed if it has a stochastic representation \citep{FZ90}:
\begin{equation}\label{eds}
\bx =w \A\bu,
\end{equation}
where 
$\A$ is a $p\times p$ matrix with $rank(\A)=p$, $w\geq 0$ is a scalar random variable representing the radius of $\bx$,  and $\bu \in \mathbb R^{p}$ is the random direction, independent of $w$ and uniformly distributed on the unit sphere in $\mathbb R^p$.
Besides the normal distribution, this family includes many other celebrated distributions, such as multivariate $t$-distribution, Kotz-type distributions, and Gaussian scale mixture. In general, the radius $w$ needs not be independent of the direction $\bu$ but can be a function of the chosen direction \citep{R89}.

Let $\bx_1,\ldots, \bx_n$ be a sequence of independent and identically distributed (i.i.d.) random vectors from the elliptical model in \eqref{eds}. Many statistical procedures for this model prefer to transform the original observations into spatial-sign samples for the purpose of robustness, which are defined as
$$
\y_j=
\begin{cases}
\sqrt{p}\frac{\bx_j}{||\bx_j||}& \bx_j\neq 0,\\
0&\bx_j=0.
\end{cases}
$$
One can refer to \cite{M06} and \cite{O10} for a comprehensive review.
When an inference is concerned with the shape matrix $\T=\A\A'$, assuming $\tr(\T)=p$ so that $w$ and $\A$ can be identified in the model \eqref{eds}, one of the most important statistics is the so-called {\em spatial-sign covariance matrix} (SSCM), i.e.
$$
\B_{n}=\frac{1}{n}\sum_{j=1}^n\y_j\y_j',
$$
which is actually the sample covariance matrix (SCM) of $(\y_j)$.
As a robust alternative to the SCM $\bS_n=\sum_{j=1}^n\bx_j\bx_j'/n$, this nonparametric scatter matrix $\B_n$ is a fast computed and orthogonally equivariant statistic with high breakdown point, and thus is highly recommended in applications, such as principle component analysis and structural test for covariance matrices, see \cite{L99}, \cite{G08}, \cite{V11}, \cite{PV16}, to name a few. Despite its merits, the SSCM is also a controversial statistic in `` small $p$, large $n$" scenarios due to its lack of affine equivariance \citep{M14}.
However, the pursuit of this property seems not advisable for high-dimensional situations, as claimed in \cite{T10} that any well-defined affine equivariant scatter matrix must be proportional to the SCM $\bS_n$ whenever $p>n$. Therefore,
it is of great interests to discover behaviors of the SSCM in high-dimensional robust statistics.

In this paper, using tools of random matrix theory, we investigate asymptotic spectral behaviors of the SSCM $\B_n$ in high-dimensional frameworks where both the dimension $p$ and the sample size $n$ tend to infinity with their ratio $p/n\rightarrow c$, a positive constant in $(0, \infty)$.
Specifically,
let $(\lambda_j)_{1\le j\le p}$ be the eigenvalues of $\B_n$, 
then the
{\em empirical spectral distribution} (ESD) of $\B_n$ is by
definition
\[
F^{\B_n}=\frac{1}{p}\sum_{j=1}^p\delta_{\lambda_j},
\]
where  $\delta_b$ denotes  the Dirac mass at $b$.  Our aim is to study the limiting properties of $F_n$ and the central limit theorem (CLT) for {\em linear spectral statistics} (LSS) of the form $\int f(x) dF_n(x)$ for a class of smooth test functions $f$.
These properties may become powerful tools to recover spectral features of the population SSCM, i.e. $\Sigma=p\E(\bx\bx'/||\bx||^2)$, and then those of the
shape matrix $\T$ since the matrices $\Sigma$ and $\T$ share the same eigenvectors and their eigenvalues have a one-to-one correspondence \citep{D16}. Moreover, as $p\rightarrow\infty$, the two matrices coincide in the sense that the spectral norm $||\Sigma-\T||\to0$, as long as $||\Sigma||$ (or $||\T||$) is uniformly bounded, see Lemma \ref{mom-y}.

Spectral properties of high-dimensional SCM have been extensively studied in random matrix theory since
the pioneer work of \cite{MP67}.
The standard model in the literature has the form
\begin{equation}\label{linear-trans}
\tx=\sigma\A \z,
\end{equation}
where $\A$ is as before, $\sigma$ is a constant, and $\z=(z_1,\ldots,z_p)'\in\mathbb R^p$ is a set of i.i.d.\ random variables satisfying E$(z_1)=0$, E$(z_1^2)=1$, and E$(z_1^4)<\infty$.
Let $\tx_1,\ldots,\tx_n$ be $n$ i.i.d.\ copies of $\tx$ and
$\widetilde \bS_n=\sum_{j=1}^n \tx_j\tx_j'/n$ be the corresponding SCM. It has been known that the ESD of $\widetilde \bS_n$ converges to the celebrated
Mar\v{c}enko-Pastur (MP)  law when $\A=I_p$, and generalized MP law for general matrix $\A$, as $(n,p)\to\infty$ with $p/n\to c>0$. One can refer to \cite{MP67} and \cite{S95}. The CLT for LSS of $\widetilde\bS_n$ was first studied in \cite{Jonsson82} by assuming the population to be standard multivariate normal.
One breakthrough on the CLT was obtained by \cite{BS04}, where the population is allowed to be general with E$(z_1^4)=3$.
This fourth moment condition was then weakened to be E$(z_1^4)<\infty$ in \cite{PZ08}. For more references, one can refer to \cite{BSbook}, \cite{BJPZ15}, \cite{Gao16}, and references therein.
However, these results do not apply to general elliptical populations since the two underlying models in \eqref{eds} and \eqref{linear-trans} have little in common, except for normal distributions.
In fact, for general elliptical populations, it has been reported that the ESD of the SCM $\bS_n$ converges to a deterministic distribution that is not a generalized MP law, but has to be characterized by both the distribution of $w$ and the limiting spectrum of $\T$ through a system of implicit equations \citep{E09,LY16}. The involvement of $w$ seriously interferes with our understanding of the spectrum of $\T$ from the ESD of $\bS_n$. This again motivates us to shift our attention to the SSCM $\B_n$ which discards the random radiuses $(w_j)$ and focus only on the directions $(\A\bu_j)$.

The main contributions of this paper are as follows. First in
Section~\ref{sec:main}, asymptotic results on the eigenvalues of $\B_n$ are derived, including the limit of the ESD $F_n$ and a new CLT for LSS of $\B_n$. As a corollary, polynomial spectral statistics are fully addressed with explicit limiting mean and covariance functions in the CLT.
Then in  Section~\ref{sec:app}, relying on these results,
we develop two statistical applications on the spectrum of $\Sigma$, the population SSCM, under a setting that the spectrum forms a  discrete distribution with finite support. One is to estimate the spectrum of $\Sigma$ through moment methods and the other is to test the hypothesis that there are no more than $d_0$ distinct eigenvalues of $\Sigma$. Technical proofs of the main theorems are gathered in Section~\ref{sec:proofs}. Some lemmas and their necessary proofs are postponed to the last section.

\section{High-dimensional theory for eigenvalues of $\B_{n}$}
\label{sec:main}

\subsection{Limiting spectral distribution of $\B_{n}$}

We consider here the limit of the ESD sequence $(F^{\B_n})$ in high-dimensional regimes, namely {\em limiting spectral distribution} (LSD).
Our main assumptions are listed below.

\medskip
\noindent{\em Assumption}   (a). \quad
Both the sample size and population dimension $n,p$ tend to infinity in such a way that $c_n=p/n\to c \in(0,\infty)$.

\medskip
\noindent{\em Assumption}   (b). \quad
Sample observations are $\y_j= \sqrt{p}\A\bu_j/||\A\bu_j||$, $j=1,\ldots,n,$
where $\A$ is a $p\times p$ matrix with $\A\A'=\T$ and $(\bu_j)$ are i.i.d.\ random vectors, uniformly distributed on the unit sphere in $\mathbb R^p$.

\medskip
\noindent{\em Assumption}   (c). \quad
The spectral norm of $\Sigma=\E(\y_1\y_1')$ is bounded and its spectral distribution $H_p$ converges weakly to a probability distribution $H$, called {\em population spectral distribution} (PSD).
\medskip

From Lemma \ref{mom-y}, it is clear that the spectral distributions of $\Sigma$ and $\T$  are asymptotically identical. So one can certainly replace $\Sigma$ with $\T$ in Assumption (c), which does not affect the LSD of $F^{\B_n}$.
However we keep $\Sigma$ because it is easy to describe the CLT for LSS using the spectral distribution $H_p$ of $\Sigma$.

For the characterization of the LSD of $F^{\B_n}$, we need to introduce the Stieltjes transform of a measure $G$ on the real line, which is defined as
$$m_G(z)=\int\frac{1}{x-z}dG(x),\quad z\in\mathbb C\setminus S_G,$$
where $S_G\subset \mathbb R$ denotes the support of $G$.
\begin{theorem}\label{lsd}
	Suppose that Assumptions (a)-(c) hold. Then, almost surely, the empirical spectral distribution $F^{\B_n}$ converges weakly to a probability distribution $F^{c, H}$,
	whose Stieltjes transform $m=m(z)$ is the unique  solution to the equation
	\begin{eqnarray}\label{mp1}
	m=\int\frac{1}{t(1-c-czm)-z}d H(t)~,\quad z\in\CC^+,
	\end{eqnarray}
	in the set $\{m\in \mathbb C:  -(1-c)/z+cm\in{\mathbb C^+}\}$ where $\mathbb C^+\equiv\{z\in \mathbb C: \Im(z)>0\}$.
\end{theorem}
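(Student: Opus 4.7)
The plan is to reduce the statement to the classical generalized Marchenko--Pastur law via a Gaussian proxy and then compare Stieltjes transforms. Since $\bu_j$ is uniform on the unit sphere, I would realize it as $\bu_j=\z_j/\|\z_j\|$ with $\z_j\sim N(0,I_p)$ i.i.d., and use the rotational invariance of $N(0,I_p)$ together with $\A\A'=\T$ to replace $\A$ by $\T^{1/2}$ without loss of generality. This yields
\begin{equation*}
\y_j \;=\; \frac{\sqrt{p}\,\T^{1/2}\z_j}{\sqrt{\z_j'\T\z_j}},\qquad \B_n \;=\; \frac{1}{n}\sum_{j=1}^n \frac{1}{\alpha_j}\,\T^{1/2}\z_j\z_j'\T^{1/2},\qquad \alpha_j:=\frac{\z_j'\T\z_j}{p}.
\end{equation*}
The natural proxy $\widetilde\B_n := n^{-1}\sum_{j=1}^n \T^{1/2}\z_j\z_j'\T^{1/2}$ is an ordinary sample covariance matrix from $N(0,\T)$. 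Lemma~\ref{mom-y} transfers Assumption~(c) from $\Sigma$ to $\T$, so Silverstein's theorem gives that $F^{\widetilde\B_n}$ converges almost surely to the distribution whose Stieltjes transform is the unique solution of \eqref{mp1} in the prescribed set. It therefore suffices to show that $\B_n$ and $\widetilde\B_n$ have the same LSD.

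The bridge between the two matrices is the multiplicative weight $1/\alpha_j$, which I would show concentrates uniformly at $1$. Because $\z_j'\T\z_j$ is a quadratic form in a Gaussian vector with $\E\z_j'\T\z_j=\tr(\T)$, $\|\T\|_F^2=O(p)$, and $\|\T\|_{\mathrm{op}}=O(1)$, the Hanson--Wright inequality combined with $\tr(\T)/p=1+o(1)$ (inherited from $\tr(\Sigma)=p$ and $\|\Sigma-\T\|\to0$) yields
\begin{equation*}
\Pr\bigl(|\alpha_j-1|>\varepsilon\bigr)\;\le\; 2\exp(-c\,\varepsilon^2 p)
\end{equation*}
uniformly in $j$. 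A union bound over $j\le n$, the fact that $p\asymp n$, and Borel--Cantelli then give $\max_{j\le n}|\alpha_j-1|\to 0$ almost surely.

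With this uniform control in hand, I would compare $m_n(z)=p^{-1}\tr(\B_n-zI)^{-1}$ and $\widetilde m_n(z)=p^{-1}\tr(\widetilde\B_n-zI)^{-1}$ for $z\in\mathbb C^+$ via the resolvent identity $(\B_n-zI)^{-1}-(\widetilde\B_n-zI)^{-1}=(\B_n-zI)^{-1}(\widetilde\B_n-\B_n)(\widetilde\B_n-zI)^{-1}$. Because $\widetilde\B_n-\B_n=n^{-1}\sum_j(1-1/\alpha_j)\T^{1/2}\z_j\z_j'\T^{1/2}$ is a sum of rank-one positive semidefinite matrices, one has $\|\widetilde\B_n-\B_n\|_1\le\max_j|1-1/\alpha_j|\cdot\tr(\widetilde\B_n)$, and the resolvent bound $\|(\cdot-zI)^{-1}\|\le 1/\Im z$ delivers
\begin{equation*}
|m_n(z)-\widetilde m_n(z)|\;\le\; \frac{1}{(\Im z)^2}\,\max_{j\le n}|1-1/\alpha_j|\,\frac{\tr(\widetilde\B_n)}{p}\;\xrightarrow{\text{a.s.}}\;0,
\end{equation*}
since $\tr(\widetilde\B_n)/p$ stays almost surely bounded thanks to $\|\T\|=O(1)$. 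Hence $m_n(z)$ and $\widetilde m_n(z)$ share the same almost-sure limit, namely the solution of \eqref{mp1} inside $\{m:-(1-c)/z+cm\in\mathbb C^+\}$.

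I expect the main obstacle to be securing the \emph{uniform} concentration $\max_j|\alpha_j-1|\to 0$ almost surely; without it one cannot pull the $\max$ outside the nuclear-norm estimate on $\widetilde\B_n-\B_n$. The sub-exponential tail provided by Hanson--Wright, under the assumed boundedness of $\|\Sigma\|$ (and therefore of $\|\T\|$), is exactly what allows the union bound to survive at the scale $n\asymp p$; once this is in place, the remainder is a routine appeal to Silverstein's equation.
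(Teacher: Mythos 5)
Your argument is correct, but it is not the route the paper takes. The paper proves Theorem \ref{lsd} in two lines by invoking Theorem 1.1 of \cite{BZ08}, which treats sample covariance matrices with i.i.d.\ columns of arbitrary (non-linear) structure and reduces the LSD to the single condition $\mathrm{Var}(\y'\C_p\y)=o(p^2)$ for bounded symmetric $\C_p$; that condition is read off from Lemma \ref{double-e}, the quadratic-form covariance computation that the paper needs anyway for the CLT. You instead write $\B_n=n^{-1}\sum_j\alpha_j^{-1}\T^{1/2}\z_j\z_j'\T^{1/2}$ with $\alpha_j=\z_j'\T\z_j/p$, prove $\max_j|\alpha_j-1|\to0$ a.s.\ by Hanson--Wright plus a union bound (legitimate at scale $p\asymp n$ since $\|\T\|_F^2=O(p)$), and kill the difference of Stieltjes transforms through the resolvent identity and the nuclear-norm bound $\|\widetilde\B_n-\B_n\|_1\le\max_j|1-1/\alpha_j|\,\tr(\widetilde\B_n)$; all of these steps check out, including the passage from pointwise a.s.\ convergence of $m_n-\widetilde m_n$ to weak convergence (take a countable set of $z$ with an accumulation point and apply Vitali). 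What your approach buys is self-containedness and elementarity: you need only Silverstein's equation for the Gaussian proxy and a standard concentration inequality, and you avoid the Bai--Zhou machinery entirely. What the paper's approach buys is brevity and reuse: Lemma \ref{double-e} must be established regardless for Lemma \ref{clt-mn}, so the LSD comes essentially for free. One cosmetic point: your claim that $\tr(\T)/p\to1$ is ``inherited from'' $\|\Sigma-\T\|\to0$ is logically backwards --- the correct order is that $\y_j$ is invariant under rescaling of $\A$, so one normalizes $\tr(\T)=p$ first (as the paper does for identifiability), and then Lemma \ref{mom-y} yields $\|\Sigma-\T\|\to0$ and the identification of the limiting PSDs of $\Sigma$ and $\T$ needed to invoke Silverstein's theorem with the $H$ of Assumption (c).
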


The LSD $F^{c, H}$ defined in \eqref{mp1} agrees with that in \cite{MP67}.
Let $\um=\um(z)$ denote  the Stieltjes transform of $\underline{F}^{c, H}=cF^{c,H} + (1-c)\de_0$.  Then \eqref{mp1} can also be represented as
\begin{equation}  \label{mp}
z  =  - \frac1 {\um}  +  c \int\!\frac{t}{1+t\um} d H(t)~,\quad z\in\CC^+.
\end{equation}
See \cite{S95}. For procedures on finding the density function and the support set of $F^{c, H}$ from \eqref{mp1} and \eqref{mp}, one is referred to  \cite{BSbook}.

\subsection{CLT for linear spectral statistics of $\B_{n}$}

Let $F^{c_n, H_p}$ be the LSD as defined in \eqref{mp} with the parameters $(c,H)$ replaced by $(c_n,  H_p)$.
Writing $G_n=F^{\B_n}-F^{c_n,H_p}$, we next study the fluctuation of
\begin{eqnarray*}
	\int f(x) dG_{n}(x)=\int f(x) d[F^{\B_{n}}(x)-F^{c_n,  H_p}(x)],
\end{eqnarray*}
which is a centralized linear spectral statistic with analytic $f$.

\begin{theorem}\label{clt}
	Suppose that Assumptions (a)-(c) hold.  Let $f_1,\ldots, f_k$ be $k$ functions analytic on an open interval containing
	\begin{equation*}\label{interval}
	\left[\liminf_{p\rightarrow\infty}\lambda_{\min}^{\Sigma}\delta_{(0,1)}(c)(1-\sqrt{c})^2,\quad \limsup_{p\rightarrow\infty}\lambda_{\max}^{\Sigma}(1+\sqrt{c})^2\right].
	\end{equation*}
	Then the random vector
	$$
	p\left(\int f_1(x)dG_n(x),\ldots, \int f_k(x)dG_n(x)\right)
	$$
	converges weakly to a Gaussian vector $(X_{f_1},\ldots,X_{f_k})$, whose mean function is
	\begin{align*}
	{\E}X_f
	=&-\frac{1}{2\pi\rm i}\oint_{\mathcal C_1} f(z)\int\frac{c(\um'(z)t)^2d H(t)}{\um(z)(1+\um(z) t)^3}dz
	-\frac{c\um(z)\um'(z)}{\pi\rm i}\oint_{\mathcal C_1} f(z)\\
	&\times\bigg[\int\frac{ \gamma_2t-t^2d H(t)}{1+\um(z) t}\int\frac{td H(t)}{(1+\um(z) t)^2}-\int\frac{td H(t)}{1+\um(z) t}\int\frac{t^2d H(t)}{(1+\um(z) t)^2}\bigg]dz
	\end{align*}
	and covariance function is
	\begin{align*}
	{\rm Cov}\left(X_f,X_g\right)
	=&-\frac{1}{2\pi^2}\oint_{\mathcal C_1}\oint_{\mathcal C_2}\frac{f(z_1)g(z_2)\um'(z_1)\um'(z_2)}{(\um(z_1)-\um(z_2))^2}dz_1dz_2\nonumber\\
	&+2 \gamma_2c\int xf'(x)dF(x)\int xg'(x)dF^{c,H}(x)\\
	&-\frac{1}{\pi{\rm i}}\oint_{\mathcal C_1} \frac{f(z)\um'(z)}{\um^2(z)}dz\int xg'(x)dF^{c,H}(x)\\
	&-\frac{1}{\pi{\rm i}}\oint_{\mathcal C_1} \frac{g(z)\um'(z)}{\um^2(z)}dz\int xf'(x)dF^{c,H}(x),
	\end{align*}
	$(f, g \in \{f_1,\cdots,f_k\})$, where the contours $\mathcal C_1$ and $\mathcal C_2$ are non-overlapping, closed, counter-clockwise orientated in the complex plane, and each encloses the support of
	the LSD $F^{c,H}$.
\end{theorem}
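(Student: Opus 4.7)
The plan is to follow the martingale-based Stieltjes transform strategy of \cite{BS04}, adapted to accommodate the self-normalization built into the spatial signs. Exploiting the uniformity of $\bu_j$ on the sphere, I would write $\bu_j=\z_j/\|\z_j\|$ for independent $\z_j\sim N(0,I_p)$, so that $\y_j=\sqrt{p}\,\A\z_j/\|\A\z_j\|$ and
\[
\B_n=\frac{1}{n}\sum_{j=1}^n\frac{\A\z_j\z_j'\A'}{w_j},\qquad w_j:=\frac{\z_j'\T\z_j}{p}.
\]
Since $w_j\to 1$ with $\E|w_j-1|^{2k}=O(n^{-k})$, the matrix $\B_n$ is a structured perturbation of the classical sample covariance $n^{-1}\sum_j\A\z_j\z_j'\A'$ from model \eqref{linear-trans}, and the additional contributions to the limiting mean and covariance can be traced back exactly to the fluctuations of $(w_j)$.

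Analyticity of the $f_i$ on a neighborhood of the support of $F^{c,H}$ lets me recast each LSS through the contour integral
\[
p\int f_i\,dG_n=-\frac{1}{2\pi{\rm i}}\oint_{\mathcal C}f_i(z)M_n(z)\,dz,\qquad M_n(z):=p\bigl(m_{F^{\B_n}}(z)-m_{F^{c_n,H_p}}(z)\bigr),
\]
with $\mathcal C$ enclosing the support. It then suffices to establish weak convergence of $M_n$ in $C(\mathcal C)$ to a Gaussian process with the announced mean and covariance kernel; tightness follows from uniform second-moment bounds on the resolvent $\bR(z)=(\B_n-zI)^{-1}$ and from Lipschitz estimates in $z$.

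I would split $M_n=M_n^{(1)}+M_n^{(2)}$ into the fluctuation part $M_n^{(1)}(z)=p(m_{F^{\B_n}}(z)-\E m_{F^{\B_n}}(z))$ and the bias $M_n^{(2)}(z)=p(\E m_{F^{\B_n}}(z)-m_{F^{c_n,H_p}}(z))$. For $M_n^{(1)}$, express it as the telescoping martingale sum $\sum_{j=1}^n(\E_j-\E_{j-1})\tr\bR(z)$ and apply Sherman-Morrison to isolate the rank-one contribution of $\y_j$; substituting $\y_j=\sqrt{p}\,\A\z_j/\|\A\z_j\|$ reduces each increment to a Gaussian quadratic form $\z_j'\A'\bR^{(j)}(z)\A\z_j$ divided by $p w_j$, where $\bR^{(j)}$ is the leave-one-out resolvent, after which the martingale CLT applies. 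The $\gamma_2$-containing terms beyond the classical Bai-Silverstein covariance kernel arise from keeping the leading correction in the expansion $w_j^{-1}=1-(w_j-1)+O((w_j-1)^2)$, which couples the fluctuation of the quadratic form to the fluctuation of the normalizer within the same $\z_j$. For $M_n^{(2)}$, the same expansion is fed into the resolvent identity $\tr\bR=-p/z+z^{-1}\sum_j n^{-1}\y_j'\bR\y_j$, producing a perturbed version of the fixed-point equation \eqref{mp}; its $O(1/n)$ correction to $m_{F^{c_n,H_p}}$, after contour integration, yields the stated expression for $\E X_f$.

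The main obstacle is rigorously controlling the coupling between the self-normalizers $w_j$ and the quadratic forms in $\z_j$ that appear in $\bR$ and $\bR^{(j)}$: unlike the linear model \eqref{linear-trans}, $\y_j$ is not a clean linear image of $\z_j$, so the usual conditional-independence arguments must be modified. I expect to handle this by truncating $w_j$ onto $[1-\varepsilon,1+\varepsilon]$ on an event of Gaussian-exponentially small complement, combined with Hanson-Wright-type bounds $\E|\z_j'\bM\z_j-\tr\bM|^{2k}\lesssim\|\bM\|_F^{2k}$, so that all remainders in the Sherman-Morrison expansions are $o_P(1)$ uniformly for $z\in\mathcal C$. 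Once the two-term expansion of $w_j^{-1}$ is validated, the remaining algebra parallels Bai-Silverstein, and the explicit formulas for $\E X_f$ and $\Cov(X_f,X_g)$ follow by differentiating the fixed-point equation \eqref{mp} to re-express all limiting quantities through $\um$ and $\um'$.
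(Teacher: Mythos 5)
Your overall strategy coincides with the paper's: reduce each LSS to a contour integral of $M_n(z)=p(m_n(z)-m_0(z))$, split $M_n=M_n^{(1)}+M_n^{(2)}$, prove finite-dimensional convergence of $M_n^{(1)}$ via the martingale CLT plus tightness, and compute the deterministic limit of $M_n^{(2)}$, with the self-normalization handled by expanding the normalizer $\|\A\z_j\|^2/p$ about $1$. The organizational difference is that the paper performs this expansion once and for all at the level of moments of the sign vectors themselves (Lemma \ref{mom-y} and, crucially, Lemma \ref{double-e}, which computes $\E(\y'\C\y-\tr\Sigma\C)(\y'\tilde\C\y-\tr\Sigma\tilde\C)$ up to $o(p)$ and exhibits the extra $\tfrac{2}{p}\gamma_2$-type terms), and then runs the Bai--Silverstein machinery directly on the $\y_j$; you instead carry the factor $w_j^{-1}$ explicitly through every resolvent identity. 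These are equivalent in principle, but the paper's packaging is cleaner because all the non-classical terms in both the mean and the covariance drop out of the single quadratic-form identity.

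There are, however, two places where your proposal as written would produce the wrong mean function. First, the truncation $w_j^{-1}=1-(w_j-1)+O((w_j-1)^2)$ suffices for the covariance (which pairs two first-order fluctuations) but not for $\E X_f$: since $\E(w_j-1)^2=2\tr(\T^2)/p^2+o(1/p)=O(1/p)$, the quadratic term in the expansion contributes to the bias at exactly the order retained after multiplying by $p$. The paper's proof of Lemma \ref{mom-y} keeps both the first- and second-order difference terms ($A_p$ and $B_p$) for precisely this reason, and both $\E(A_p)$ and $\E(B_p)$ are $O(1/p)$. Your closing claim that the formulas follow ``once the two-term expansion of $w_j^{-1}$ is validated'' is therefore not quite right; you need the three-term expansion for $M_n^{(2)}$. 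Second, the theorem centers at $F^{c_n,H_p}$ built from the spectrum of $\Sigma=p\,\E(\bx\bx'/\|\bx\|^2)$, not of $\T$; by \eqref{sts} these differ by $O(1)$ in trace against a bounded test matrix, so $p$ times the difference of the corresponding Stieltjes transforms is $O(1)$ and shifts the limiting mean. If you perturb off the classical SCM of the $\A\z_j$, whose natural centering involves $\T$, you must explicitly account for this deterministic $\Sigma$-versus-$\T$ discrepancy (or center at $\Sigma$ from the outset, as the paper does). With those two corrections incorporated, the rest of your outline does parallel the paper's argument.
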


When the underlying population is multivariate normal,  the elliptical model in \eqref{eds} and the linear transformation model in \eqref{linear-trans} hold simultaneously. In this case, it is interesting to compare the limiting distribution in Theorem \ref{clt} based on SSCM with the classical result in \cite{BS04} based on SCM. It turns out that there are some additional terms in our new CLT: the second contour integral in the mean function and the second to fourth summands in the covariance function.

Among all LSS, polynomial spectral statistics are of fundamental importance.
The bases of these statistics are
moments of ESD $F^{\B_n}$, i.e.
\begin{equation*}\label{beta:nk}
\hat \beta_{nj}=\frac{1}{p}\tr(\B_n^j)=\int x^jdF^{\B_n}(x), \quad j=1,2,\ldots.
\end{equation*}
The first order moment $\hat \beta_{n1}$ is 1 since $\tr(\B_n)\equiv\tr(\Sigma)\equiv p$.
Other moments $(\hat \beta_{nj})$, $j\geq 2$, are random. Their limiting behavior can be described through the following two  quantities
\begin{eqnarray*}\label{betan}
	\beta_{nj}=\int x^jdF^{c_n,  H_p}(x)\quad\text{and}\quad  \gamma_{nj}=\int t^jd  H_p(t),
\end{eqnarray*}
as well as their limits, denoted by $\beta_j$ and $\gamma_j$, respectively, $j=1,2,\ldots.$
From  \cite{NS06}, the quantities $(\beta_{nj})$ and
$(\gamma_{nj})$ are connected through the recursive formulae:
\begin{eqnarray}\label{beta-gamma}
\beta_{nj}=
\sum c_n^{i_1+\cdots+i_j-1}( \gamma_{n2})^{i_2}\cdots( \gamma_{nj})^{i_{j}}\phi(i_1,\ldots,i_{j}),\quad j\geq2,
\end{eqnarray}
and $\beta_{n1}=\gamma_{n1}\equiv1$, where the sum runs over the following partitions of $j$:
$$
(i_1,\ldots,i_{j}): j=i_1+2i_2+\cdots+ji_{j}, \quad i_l\in\mathbb N,
$$
and $\phi(i_1,\ldots,i_{j})=j!/[i_1!\cdots i_{j}!(j+1-i_1-\cdots -i_{j})!].$
The joint limiting distribution of moments $(\hat \beta_{nj})_{2\leq j\leq k}$ can be derived from Theorem \ref{clt} by taking functions $f_j(x)=x^j,
j=2,\ldots,k$. For this particular case, the mean and covariance functions in the limiting distribution can be explicitly formulated.

\begin{corollary}\label{clt-mom}
	Suppose that Assumptions (a)-(c) hold. Then the random vector
	\begin{equation*}
	p\left(\hat\beta_{n2}-\beta_{n2},\ldots, \hat\beta_{nk}-\beta_{nk}\right)\xrightarrow{D}N_{k-1}(v,\Psi).
	\end{equation*}
	The mean vector $v=(v_j)_{2\leq j\leq k}$ satisfies
	\begin{eqnarray*}
		v_j=
		\bigg[\frac{cP^j}{(j-2)!}\bigg(\frac{P_{2,3}}{1-cz^2P_{2,2}}+2 \gamma_2P_{1,1}P_{1,2}-2P_{2,1}P_{1,2}-2P_{1,1}P_{2,2}\bigg)\bigg]^{(j-2)}\bigg|_{z=0},
	\end{eqnarray*}
	where $P_{s,t}=\int x^s(1+xz)^{-t}d H(x)$, $P=(czP_{1,1}-1)$,
	and $g^{(\ell)}(z)$ denotes the $\ell$th derivative of $g(z)$ with respect to $z$.
	The covariance matrix $\Psi=(\psi_{ij})_{2\leq i, j\leq k}$ has  entries
	\begin{eqnarray*}
		\psi_{ij}=2\sum_{\ell=0}^{i-1}(i-\ell)u_{i,\ell}u_{j,i+j-\ell}+2c \gamma_2ij\beta_i\beta_j+2j\beta_ju_{i,i+1}+2i\beta_iu_{j,j+1},
	\end{eqnarray*}
	where $u_{s,t}=[P^s]^{(t)}/t!|_{z=0}$.
	
\end{corollary}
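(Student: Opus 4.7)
The plan is to apply Theorem~\ref{clt} with the polynomial test functions $f_j(x)=x^j$ for $j=2,\ldots,k$, which are entire and hence analytic on any interval. Joint asymptotic normality is immediate, so the only task is to evaluate the contour integrals in the mean and covariance formulas explicitly and show that they reduce to the stated closed forms.

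The key tool is the change of variables $w=\um(z)$, inverted through \eqref{mp} to $z=P(w)/w$ with $P(w)=cwP_{1,1}(w)-1$. Since $\um(z)\sim-1/z$ at infinity, a counterclockwise contour around the support of $F^{c,H}$ pulls back to a small \emph{clockwise} contour around $w=0$, producing an overall sign flip. Under this substitution one has $f_j(z)=(P/w)^j$, $\um(z)=w$, $\um'(z)\,dz=dw$, $z'(w)=1/w^{2}-cP_{2,2}$, and $(\um'(z))^{2}\,dz=w^{2}\,dw/(1-cw^{2}P_{2,2})$.

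For the mean, the first summand of $\E X_{f_j}$ in Theorem~\ref{clt} converts to $\mathrm{Res}_{w=0}[cP^{j}P_{2,3}/(w^{j-1}(1-cw^{2}P_{2,2}))]$, which by Laurent extraction equals $\tfrac{1}{(j-2)!}[cP^{j}P_{2,3}/(1-cz^{2}P_{2,2})]^{(j-2)}|_{z=0}$. The second summand, using $\um(z)\um'(z)\,dz=w\,dw$, becomes $2c\,\mathrm{Res}_{w=0}[P^{j}(\gamma_2 P_{1,1}P_{1,2}-P_{2,1}P_{1,2}-P_{1,1}P_{2,2})/w^{j-1}]$ after the orientation flip, yielding the remaining three terms with coefficient $2c/(j-2)!$. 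Collecting the four contributions reproduces the claimed formula for $v_j$.

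For the covariance, the last three summands in Theorem~\ref{clt} specialize at once: $\int xf_j'(x)\,dF^{c,H}(x)=j\beta_j$ supplies the term $2c\gamma_2 ij\beta_i\beta_j$, while each single-contour integral reduces under the substitution to $-2\pi i\,\mathrm{Res}_{w=0}[(P/w)^{i}/w^{2}]=-2\pi i\,u_{i,i+1}$, producing $2j\beta_ju_{i,i+1}$ and symmetrically $2i\beta_iu_{j,j+1}$. The double-contour summand, after independent changes $w_\alpha=\um(z_\alpha)$ on the two non-overlapping $z$-contours (which map to disjoint contours around $w=0$), becomes
\begin{equation*}
-\frac{1}{2\pi^{2}}\oint\!\!\oint\frac{P(w_1)^{i}P(w_2)^{j}}{w_1^{i}w_2^{j}(w_1-w_2)^{2}}\,dw_1\,dw_2;
\end{equation*}
placing the $w_1$-contour inside the $w_2$-contour, expanding $(w_1-w_2)^{-2}=w_2^{-2}\sum_{n\ge0}(n+1)(w_1/w_2)^{n}$, and evaluating the residue at $w_1=0$ followed by that at $w_2=0$ produces exactly $2\sum_{\ell=0}^{i-1}(i-\ell)u_{i,\ell}u_{j,i+j-\ell}$. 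The main obstacle is this final double residue: one must carefully track the orientation reversal induced by $\um$, the nested geometry of the two $w$-contours (which removes the $w_1=w_2$ pole from the inner region and thus decouples the computation into two independent residues at $0$), and the identification of the resulting double Laurent coefficient with the stated weighted bilinear form in $u_{s,t}$. The remaining manipulations are routine residue calculus.
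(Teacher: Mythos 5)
Your proposal is correct and follows essentially the same route as the paper: substitute $w=\um(z)$ so that \eqref{mp} gives $z=P(w)/w$, note that $\um(\mathcal C)$ is a clockwise contour around $0$, and evaluate each contour integral of Theorem \ref{clt} as a residue at $w=0$ (the paper defers the double-contour covariance computation to Qin and Li (2017), which is exactly the nested-contour Laurent expansion you carry out). Your version is in fact more explicit than the paper's sketch, and the orientation bookkeeping, the Jacobian identities $(\um')^2dz=w^2dw/(1-cw^2P_{2,2})$ and $\um\um'\,dz=w\,dw$, and the double residue all check out.
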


\section{Applications to spectral inference}
\label{sec:app}

Inference on PSD is fundamentally important in many high-dimensional statistical analysis, such as the principal component analysis \citep{J01,C13,WF17}, factor models \citep{F08,F13}, and covariance matrix estimation \citep{LW12}.

In this section, we illustrate two statistical applications of the theoretical results developed in Section \ref{sec:main}: one is estimating a PSD and the other is testing the order of a PSD.
The family of PSDs under study is a class of parameterized discrete distributions with finite support on $\mathbb R^+$, that is,
\begin{equation}\label{psd}
H(\btheta)=w_1\delta_{a_1}+\cdots+ w_d\delta_{a_d},\quad \btheta=(a_1, w_1,\ldots, a_{d-1}, w_{d-1})\in{\Theta},
\end{equation}
where ${\Theta} =\left\{\btheta: 0< a_1<\cdots <a_d<\infty;\ 0< \prod_{i=1}^d w_i, \ \sum_{i=1}^{d}a_i^\ell w_i=1, \ell = 0, 1\right\}.$
Here the restriction $\sum_{i=1}^da_iw_i=1$ is due to the fact that $\int tdH_p(t)=\tr(\Sigma)/p\equiv1$. For the model \eqref{psd}, the order of $H$ refers to the cardinality of its support, which is equal to $d$.
This model for PSDs can be viewed as the spectral structure of noise covariance matrices in factor models \citep{F08}, and
extensions of the spiked model \citep{J01} which allows the number of leading eigenvalues to grow with the dimension $p$.
More discussions on this model can be found in  \cite{E08}, \cite{R08}, \cite{B10}, \cite{L14}, etc. Similar to \cite{E08}, we adopt the setting of fixed PSDs in this section, i.e.  $(c_n, H_p)\equiv (c, H)$ for all $(n,p)$ large.

\subsection{Estimation of a PSD}
For the model in \eqref{psd}, \cite{B10} introduced a moment method for the PSD estimation.
By assuming the order $d$ to be known, their method first estimates the moments $(\gamma_j)$ of $H$ through the recursive formulae in \eqref{beta-gamma}, and then solve a system of moment equations,
$
\{\hat\gamma_j=\sum_{i=1}^da_i^jw_i,\ j=0,\ldots,2d-1\},
$
to get a consistent estimator of $\btheta$.

In our situation, with notation $\bbeta_j=(\beta_2,\ldots,\beta_j)'$ and $\bgamma_j=(\gamma_2,\ldots,\gamma_j)'$ for $j\geq 2$, we denote
\begin{equation*}\label{maps}
g_1:\bgamma_{2d-1}\rightarrow\btheta \quad\text{and}\quad g_{2,j}:\bbeta_j\rightarrow \bgamma_j
\end{equation*}
as the mappings between the corresponding vectors. These two mappings are both one-to-one and the determinants of their Jacobian matrices are all nonzero. See \cite{B10}.  Therefore, applying Theorem \ref{lsd},
$\hat\bbeta_j:=(\hat\beta_{n2},\ldots,\hat\beta_{nj})'\xrightarrow{a.s.}\bbeta_j$ which is followed by $\hat\btheta_n:=g_1\circ g_{2,2d-1}(\hat\bbeta_{2d-1})\xrightarrow{a.s.}\btheta$, as $(n,p)\to\infty$.
However, as shown by the CLT in Corollary \ref{clt-mom}, the estimator $\hat\bbeta_j$ is biased by the order of $O(1/p)$. So it's natural to modify $\hat\bbeta_j$ by subtracting its limiting mean in the CLT to obtain a better estimator of $\btheta$. Beyond this correction, the CLT can also provide confidence regions for the parameter $\btheta$.

Denote the modified estimators of $\bbeta_j$, $\bgamma_j$, and $\btheta$ by
\begin{equation}\label{cor-est}
\hat\bbeta_j^*=\hat\bbeta_j-\frac{1}{p}(\hat  v_2,\ldots,\hat v_j)',\quad \hat\bgamma_j^*= g_{2,j}(\hat\bbeta_j^*), \quad\text{and}\quad \hat\btheta_n^*=g_1(\hat\bgamma_{2d-1}^*),
\end{equation}
respectively, where $\hat v_\ell=v_\ell(\hat\bbeta_\ell)$ with $v_\ell$ defined in Corollary \ref{clt-mom} for $\ell=2,\ldots, j.$
From Theorem \ref{lsd}, Corollary \ref{clt-mom}, and a standard application of the Delta method, one may easily get asymptotic properties of these estimators.

\begin{theorem}\label{th-gamma}
	Suppose that  Assumptions  (a)-(c) hold and the true value $\btheta$ is an inner point of $\Theta$. Then we have
	$\hat\bbeta_j^*\xrightarrow{a.s.} \bbeta_j$,
	$\hat\bgamma_j^*\xrightarrow{a.s.} \bgamma_j$, $\hat\btheta_n^*\xrightarrow{a.s.}\btheta$, and
	moreover
	\begin{align}\label{clt-gamma}
	p\big(\hat\bgamma_j^*-\bgamma_{j}\big)&\xrightarrow{D} N_{j-1}(0,J_{2,j}\Psi_j J_{2,j}'),\\
	p\big(\hat\theta_n^*-\btheta\big)&\xrightarrow{D} N_{2k-2}(0,J_1J_{2,2d-1}\Psi_{2d-1}J_{2,2d-1}'J_1'),\nonumber
	\end{align}
	where $J_{1}$ and $J_{2,\ell}$ represent the Jacobian matrices $\partial g_{1}/\partial \bgamma_{2d-1}$  and $\partial g_{2,\ell}/\partial \bbeta_\ell$,
	respectively, and $\Psi_\ell$ is defined in Corollary \ref{clt-mom} with $k=\ell$.
\end{theorem}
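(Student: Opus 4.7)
The plan is to first establish the three almost-sure convergences by combining Theorem \ref{lsd} with a continuous-mapping argument, and then to derive the two CLTs from Corollary \ref{clt-mom} by absorbing the plug-in bias correction with Slutsky's lemma and propagating through the smooth reparametrizations $g_{2,j}$ and $g_1$ via the multivariate Delta method.

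For consistency, Theorem \ref{lsd} gives $F^{\B_n}\Rightarrow F^{c,H}$ almost surely; combined with the standard no-eigenvalues-outside-the-support estimate that is available under Assumption (c), this yields $\hat\bbeta_j\xrightarrow{a.s.}\bbeta_j$. Each $v_\ell$ in Corollary \ref{clt-mom} is an explicit (polynomial-type) smooth function of $\bbeta_\ell$, so the plug-in $\hat v_\ell=v_\ell(\hat\bbeta_\ell)$ is almost surely bounded and the correction $p^{-1}(\hat v_2,\ldots,\hat v_j)'$ vanishes almost surely. Continuous mapping through the smooth bijections $g_{2,j}$ and $g_1$ then yields $\hat\bgamma_j^*\xrightarrow{a.s.}\bgamma_j$ and $\hat\btheta_n^*\xrightarrow{a.s.}\btheta$.

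For the CLTs, recall that under the fixed-PSD setting adopted in Section \ref{sec:app} we have $\beta_{nj}\equiv\beta_j$ exactly, so Corollary \ref{clt-mom} reads
\[
p(\hat\bbeta_j-\bbeta_j)\xrightarrow{D}N_{j-1}\bigl((v_2,\ldots,v_j)',\Psi_j\bigr).
\]
Writing
\[
p(\hat\bbeta_j^*-\bbeta_j)=\bigl\{p(\hat\bbeta_j-\bbeta_j)-(v_2,\ldots,v_j)'\bigr\}+\bigl\{(v_2,\ldots,v_j)'-(\hat v_2,\ldots,\hat v_j)'\bigr\},
\]
the first bracket converges weakly to $N_{j-1}(0,\Psi_j)$, while the continuity of each $v_\ell$ in $\bbeta_\ell$ together with the consistency established above sends the second bracket to zero almost surely. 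Slutsky's lemma then gives $p(\hat\bbeta_j^*-\bbeta_j)\xrightarrow{D}N_{j-1}(0,\Psi_j)$. Applying the multivariate Delta method to $\hat\bgamma_j^*=g_{2,j}(\hat\bbeta_j^*)$ and to $\hat\btheta_n^*=g_1\circ g_{2,2d-1}(\hat\bbeta_{2d-1}^*)$ produces the two stated CLTs with Jacobians $J_{2,j}$ and $J_1 J_{2,2d-1}$, respectively; the hypothesis that $\btheta$ is an interior point of $\Theta$, together with the non-singularity of $J_1$ and $J_{2,\ell}$ cited from \cite{B10}, ensures that these Jacobians are well-defined and non-degenerate at the true parameter.

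The only step that requires genuine care is the plug-in bias term: one needs $\hat v_\ell$ to approximate $v_\ell$ at an error rate that vanishes after multiplication by $p$, which follows from the continuity of $v_\ell$ as a function of $\bbeta_\ell$ through the explicit derivative formula of Corollary \ref{clt-mom} (itself analytic in its arguments on a compact neighbourhood of $H$). Everything else is a routine application of standard limit theory.
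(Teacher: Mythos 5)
Your proposal is correct and follows exactly the route the paper intends (the paper itself only says ``From Theorem \ref{lsd}, Corollary \ref{clt-mom}, and a standard application of the Delta method''): consistency from the almost-sure weak convergence of $F^{\B_n}$, the CLT for $\hat\bbeta_j^*$ by recentering Corollary \ref{clt-mom} and absorbing the plug-in bias $\hat v_\ell\to v_\ell$ via Slutsky, then the Delta method through $g_{2,j}$ and $g_1$. Your explicit appeal to the no-eigenvalues-outside-the-support bound (Lemma \ref{lambda-bound}) to upgrade weak convergence of $F^{\B_n}$ to convergence of its moments is a detail the paper leaves implicit, and is correctly placed; the only nitpick is that the plug-in step needs merely $\hat v_\ell-v_\ell=o_p(1)$, not a rate that survives multiplication by $p$, which your own decomposition already shows.
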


\subsection{Test for the order of a PSD}

The aforementioned estimation procedure requires that the order $d$ of the PSD be pre-specified. In general, this prior knowledge should be testified in advance.
To deal with this problem, we consider the hypotheses
\begin{eqnarray}\label{hyp}
H_0: d\leq d_0\quad v.s.\quad H_1: d>d_0,
\end{eqnarray}
where $d_0\geq 1$ is a known constant. These hypotheses can also be regarded as a generalization of the well-known sphericity hypotheses on covariance matrices, i.e. the case $d_0=1$.

In \cite{Q16}, a test procedure was outlined based on a moment matrix $\Gamma$ and its estimator $\widehat\Gamma$ which can be formulated as
\begin{eqnarray*}\label{gamma-h}
	\Gamma= \left(\begin{matrix}
		1&\gamma_1&\cdots&\gamma_{d_0}\\
		\gamma_1&\gamma_2&\cdots&\gamma_{d_0+1}\\
		\vdots&\vdots&&\vdots\\
		\gamma_{d_0}&\gamma_{d_0+1}&\cdots&\gamma_{2d_0}
	\end{matrix}\right)\quad\text{and}\quad
	\widehat \Gamma=
	\left(\begin{matrix}
		1&\hat\gamma_1&\cdots&\hat\gamma_{d_0}\\
		\hat\gamma_1&\hat\gamma_2&\cdots&\hat\gamma_{d_0+1}\\
		\vdots&\vdots&&\vdots\\
		\hat\gamma_{d_0}&\hat\gamma_{d_0+1}&\cdots&\hat\gamma_{2d_0}
	\end{matrix}\right).
\end{eqnarray*}
Here we set $\hat{\gamma}_1=1$ and $\hat\gamma_j=\hat{\gamma}^*_j$, as defined in \eqref{cor-est}, for $j\geq 2$.
It has been proved that the determinant $\det(\Gamma)$ of $\Gamma$ is zero if the null hypothesis in \eqref{hyp} holds, otherwise $\det(\Gamma)$ is strictly positive \citep{L14}.
Therefore, the determinant $\det(\widehat\Gamma)$ can serve as a test statistic for \eqref{hyp} and the null hypothesis shall be rejected if the statistic is significantly greater than zero.
Applying Theorem \ref{th-gamma} and the main theorem in \cite{Q16}, the asymptotic distribution of $\det(\widehat\Gamma)$ is obtained immediately.

\begin{theorem}\label{th1}
	Suppose that Assumptions (a)-(c) hold. Then the statistic $\det(\widehat\Gamma)$ is asymptotically normal, i.e.
	\begin{equation}\label{clt-G}
	p\left(\det(\widehat\Gamma)-\det(\Gamma)\right)\xrightarrow{D} N(0,\sigma^2),
	\end{equation}
	where $\sigma^2=\alpha'V\Omega V'\alpha$ with $\alpha=vec(adj(\Gamma))$, the vectorization of the adjugate matrix of $\Gamma$.
	The first two rows and columns of the $(2d_0+1)\times (2d_0+1)$ matrix $\Omega$ consist of  zero and the remaining submatrix $J_{2,2d_0}\Psi_{2d_0} J_{2,2d_0}'$ is defined in \eqref{clt-gamma}.
	The $(d_0+1)^2\times (2d_0+1)$ matrix $V=(v_{ij})$ is a 0-1 matrix with only $v_{i,a_i}=1$, $a_i=i-\lfloor(i-1)/(d_0+1)\rfloor d_0$, $i=1,\ldots,(d_0+1)^2$, where $\lfloor x\rfloor$ denotes the greatest integer not exceeding $x$.
\end{theorem}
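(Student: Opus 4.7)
The plan is to view $\det(\widehat\Gamma)$ as a smooth function of the estimated moment vector and invoke the delta method, leveraging Theorem~\ref{th-gamma} for the joint asymptotic normality of $\hat\bgamma_{2d_0}^* = (\hat\gamma_2^*,\ldots,\hat\gamma_{2d_0}^*)^\prime$. Concretely, stack the full moment vector into $\bgamma^{(+)} = (\gamma_0,\gamma_1,\gamma_2,\ldots,\gamma_{2d_0})^\prime \in \mathbb{R}^{2d_0+1}$, where $\gamma_0 = \gamma_1 = 1$ are deterministic, and let $\hat\bgamma^{(+)}$ denote the corresponding sample vector with $\hat\gamma_0 = \hat\gamma_1 = 1$ fixed. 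Since every entry of the Hankel matrix satisfies $\Gamma_{ij}=\gamma_{i+j-2}$, the vectorization factors as $\mathrm{vec}(\Gamma) = V \bgamma^{(+)}$, where $V$ is the $(d_0+1)^2\times(2d_0+1)$ 0-1 matrix whose $i$th row selects the entry of $\bgamma^{(+)}$ corresponding to the Hankel index of the $i$th entry of $\mathrm{vec}(\Gamma)$; a direct calculation from the column-major convention gives $a_i = i - \lfloor(i-1)/(d_0+1)\rfloor d_0$, exactly as in the statement.

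The key differentiation step is Jacobi's formula: $d\det(\Gamma) = \mathrm{tr}(\mathrm{adj}(\Gamma)\,d\Gamma) = \mathrm{vec}(\mathrm{adj}(\Gamma)^\prime)^\prime\,\mathrm{vec}(d\Gamma)$. Symmetry of $\Gamma$ implies symmetry of $\mathrm{adj}(\Gamma)$, so the vectorized gradient is exactly $\alpha = \mathrm{vec}(\mathrm{adj}(\Gamma))$. Composing with the linear map $V$ and applying the delta method yields
\[
 p\bigl(\det(\widehat\Gamma)-\det(\Gamma)\bigr) \;=\; \alpha^\prime V \bigl[p(\hat\bgamma^{(+)} - \bgamma^{(+)})\bigr] + o_p(1),
\]
so the limiting variance equals $\sigma^2 = \alpha^\prime V\Omega V^\prime\alpha$, where $\Omega$ is the asymptotic covariance of $p(\hat\bgamma^{(+)}-\bgamma^{(+)})$. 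Because $\hat\gamma_0$ and $\hat\gamma_1$ are identically one, the first two rows and columns of $\Omega$ vanish, while the remaining $(2d_0-1)\times(2d_0-1)$ block coincides with $J_{2,2d_0}\Psi_{2d_0}J_{2,2d_0}^\prime$ by \eqref{clt-gamma}. This matches the description of $\Omega$ in the statement, and the cited main theorem of \cite{Q16} encapsulates precisely this delta-method passage from a moment CLT to a Hankel-determinant CLT, so \eqref{clt-G} follows.

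The principal technical obstacle is purely bookkeeping: one must verify that the vectorization convention is consistent with the explicit formula for the nonzero positions of $V$, and check that the degeneracy in the first two coordinates of $\hat\bgamma^{(+)}$ correctly zeroes out the corresponding entries of $\Omega$ without interacting spuriously with the gradient $\alpha$ (it does not, since the zero rows and columns of $\Omega$ annihilate the deterministic coordinates). Beyond this, the argument is a routine composition of the delta method with the polynomial map $\bgamma \mapsto \det(\Gamma)$ applied to the asymptotically Gaussian estimator guaranteed by Theorem~\ref{th-gamma}.
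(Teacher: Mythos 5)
Your proposal is correct and follows essentially the same route as the paper, which simply combines Theorem \ref{th-gamma} with the main theorem of \cite{Q16} — that cited result is exactly the delta-method passage (via Jacobi's formula and the Hankel selection matrix $V$) from a moment CLT to a determinant CLT that you spell out. Your explicit verification of the index formula $a_i$ and of the vanishing first two rows and columns of $\Omega$ fills in bookkeeping that the paper delegates to the citation.
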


From Theorem \ref{th-gamma}, the limiting variance $\sigma^2$ in \eqref{clt-G} is a continuous function of $\bgamma_{4d_0}$. While, under the null hypothesis, this variance is a function of
$\bgamma_{2d_0-1}$, denoted by $\sigma^2_{H_0}(\bgamma_{2d_0-1})$.
Let $\hat\sigma_{H_0}^2=\sigma^2_{H_0}(\hat\bgamma^*_{2d_0-1})$. Then it is a strongly consistent estimator of $\sigma_{H_0}^2(\bgamma_{2d_0-1})$.

\begin{corollary}\label{th2}
	Suppose that Assumptions (a)-(c) hold. Then, under the null hypothesis,
	\begin{eqnarray*}
		T_n:=\frac{p\det(\widehat \Gamma)}{\hat\sigma_{H_0}}\xrightarrow{D} N(0,1),
	\end{eqnarray*}
	as $n\to\infty$. In addition, the asymptotic power of  $T_n$ tends to 1.
\end{corollary}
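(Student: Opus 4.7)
The plan has two parts: establishing the null limiting distribution and then the divergence under the alternative. For the null part, the key observation is that under $H_0$ the moment matrix $\Gamma$ is singular, so $\det(\Gamma)=0$ by the result of \cite{L14} recalled above. Theorem~\ref{th1} therefore specializes to $p\det(\widehat\Gamma)\xrightarrow{D} N(0,\sigma_{H_0}^2)$. Combining this with the strong consistency $\hat\sigma_{H_0}^2\xrightarrow{a.s.}\sigma_{H_0}^2(\bgamma_{2d_0-1})$ asserted in the paragraph preceding the corollary---which itself rests on applying the continuous mapping theorem to $\hat\bgamma^*_{2d_0-1}\xrightarrow{a.s.}\bgamma_{2d_0-1}$ from Theorem~\ref{th-gamma}---a standard Slutsky argument yields $T_n\xrightarrow{D} N(0,1)$.

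For the asymptotic power, under $H_1$ we have $d>d_0$ and $\det(\Gamma)$ is a strictly positive constant, again by \cite{L14}. Theorem~\ref{th-gamma} applied to $\hat\bgamma^*_{2d_0}$ (the moments filling all entries of $\widehat\Gamma$) together with the continuous mapping theorem gives $\det(\widehat\Gamma)\xrightarrow{a.s.}\det(\Gamma)>0$, while $\hat\sigma_{H_0}^2$ still converges almost surely to the finite constant $\sigma_{H_0}^2(\bgamma_{2d_0-1})$---this is the null-formula variance evaluated at the true moments under the alternative, and the consistency is preserved because $\sigma_{H_0}^2(\cdot)$ is continuous regardless of which hypothesis holds. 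Consequently $T_n$ is asymptotically equivalent to $p\det(\Gamma)/\sigma_{H_0}(\bgamma_{2d_0-1})$, which diverges at rate $p$, so $P(T_n>z_\alpha)\to 1$ for any fixed critical value.

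The main obstacle, already flagged in the paragraph preceding the corollary, is the algebraic reduction $\sigma^2\big|_{H_0}=\sigma_{H_0}^2(\bgamma_{2d_0-1})$: the generic variance from Theorem~\ref{th1} involves moments up through order $4d_0$, yet under the null it must collapse to a function of $\bgamma_{2d_0-1}$ alone. The way to see this is to exploit the rank-one structure of $\text{adj}(\Gamma)$ in the boundary case $d=d_0$: writing $\text{adj}(\Gamma)\propto\bv\bv'$ with $\bv$ spanning the one-dimensional kernel of the Hankel matrix $\Gamma$, the bilinear form $\alpha'V\Omega V'\alpha$ contracts to an expression involving only the entries $\gamma_1,\ldots,\gamma_{2d_0-1}$, since the outer contraction by $\bv$ kills the rows/columns carrying $\gamma_{2d_0},\ldots,\gamma_{4d_0}$. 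A secondary technical point to address is the degenerate sub-case $d<d_0$, where $\text{adj}(\Gamma)\equiv 0$ forces $\sigma_{H_0}^2=0$; here one must supplement the above by a higher-order delta expansion showing $p\det(\widehat\Gamma)=o_p(1)$, so that the test remains conservative and the final power statement still goes through.
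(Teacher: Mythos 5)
Your two-part argument---Slutsky's theorem combining Theorem~\ref{th1} (with $\det(\Gamma)=0$ under $H_0$) and the strong consistency of $\hat\sigma^2_{H_0}$ for the null limit, then almost sure convergence of $\det(\widehat\Gamma)$ to the strictly positive constant $\det(\Gamma)$ together with the normalization by $p$ for the power---is exactly the argument the paper intends when it says the corollary ``follows directly from Theorem~\ref{th1}.'' On the main line you are correct and in agreement with the paper.

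Two remarks on the supplementary discussion. First, the mechanism you propose for the reduction $\sigma^2\big|_{H_0}=\sigma^2_{H_0}(\bgamma_{2d_0-1})$ does not work: the quadratic form $\alpha'V\Omega V'\alpha$ cannot ``kill'' the dependence on $\gamma_{2d_0},\ldots,\gamma_{4d_0}$ by contraction, because those moments enter through the \emph{entries} of $\Omega=J_{2,2d_0}\Psi_{2d_0}J_{2,2d_0}'$ (via the $\beta_i$ and $u_{s,t}$ in Corollary~\ref{clt-mom}), not through separable rows or columns that $\mathrm{adj}(\Gamma)$ could annihilate. The actual reason is moment determinacy: under $H_0$ the PSD has at most $d_0$ atoms, so it is determined by $\bgamma_{2d_0-1}$ through the bijection $g_1$, and hence every $\gamma_j$ with $j\geq 2d_0$ is itself a smooth function of $\bgamma_{2d_0-1}$; substituting these into the generic $4d_0$-moment formula yields $\sigma^2_{H_0}$. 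In any case this reduction is already asserted in the paragraph preceding the corollary and need not be re-derived. Second, your observation about the sub-case $d<d_0$ is a legitimate concern (there $\mathrm{adj}(\Gamma)=0$, so $\sigma^2_{H_0}=0$ and $T_n$ is a $0/0$ ratio), but your proposed remedy is not a proof: showing $p\det(\widehat\Gamma)=o_p(1)$ says nothing about $T_n$ when the denominator $\hat\sigma_{H_0}$ also tends to zero. The paper implicitly works in the non-degenerate boundary case $d=d_0$ (consistent with the interior-point assumption of Theorem~\ref{th-gamma}), and your write-up should either do the same explicitly or supply a genuine rate comparison between numerator and denominator in the degenerate case.
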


Corollary \ref{th2} follows directly from Theorem \ref{th1} and its proof is thus omitted. This corollary includes as a particular case the sphericity test. For this case,  the test statistic reduces to $T_n=n(\hat\gamma^*_2-1)/2$ and its null distribution is consistent with that in \cite{PV16}.

\subsection{Simulation experiments}

Simulations are carried out to evaluate the performance of proposed estimation and test for discrete PSDs in \eqref{psd}.
Samples of $(z_{ij})$ are drawn from $N(0,1)$ and all statistics are calculated from 10,000 independent replications.

The estimation procedure are conducted for two PSDs, Models 1 and 2: Model 1 is of  order 2 with the dimension to sample size ratio $c=2$ and Model 2 is of order 3 with the ratio $c=1/4$.
\begin{itemize}
	\item Model 1: $H_1=0.5\delta_{0.5}+0.5\delta_{1.5}$ and $c=2$.
	\item Model 2: $H_2=0.3\delta_{0.2}+0.4\delta_1+0.3\delta_{1.8}$ and $c=1/4$.
\end{itemize}
The sample size is $n=100, 200, 400$ for Model 1 and $n=400,800,1600$ for Model 2, respectively.
In addition to empirical means and standard deviations of all estimators, we also calculate 95\% confidence intervals for all parameters and report their coverage probabilities. Results are collected in Tables \ref{table1} and \ref{table2}, which clearly demonstrate the consistency of all estimators as the sample size $n$ become large.

\begin{table*}
	\caption{\label{table1} Estimation for Model 1 with sample size $n$= 100,200,400 and  $c=2$. The number of independent replications is 10,000 and the nominal coverage probability (C.\ P.) is fixed at $95\%$.}
	\centering
			\begin{tabular}{*{12}{c}}
				\hline
				$\theta$        &   \multicolumn{3}{c}{$n=100$} &&  \multicolumn{3}{c}{$n=200$} &&\multicolumn{3}{c}{$n=400$} \\
				&   Mean &St. D.& C. P.&&   Mean &St. D.&C. P.&&  Mean &St. D.&C. P. \\
				\cline{2-4} \cline{6-8}\cline{10-12}
				$a_1=0.5$&0.4839&0.1145&0.9375&&0.4960&0.0550&0.9491&&0.5000&0.0269&0.9486\\
				$w_1=0.5$&0.4915&0.1135&0.9137&&0.4968&0.0588&0.9423&&0.4997&0.0292&0.9488\\
				$a_2=1.5$&1.5030&0.1330&0.9288&&1.4990&0.0668&0.9426&&1.4998&0.0329&0.9487\\
				$w_2=0.5$&0.5085&0.1135&0.9137&&0.5032&0.0588&0.9423&&0.5003&0.0292&0.9488\\
				\hline
		\end{tabular}
\end{table*}

\begin{table*}
	\caption{\label{table2}Estimation for Model 2 with sample size $n$= 400,800,1600 and  $c=1/4$. The number of independent replications is 10,000 and the nominal coverage probability (C.\ P.) is fixed at $95\%$.}
	\centering
\begin{tabular}{lcccccccccccc}
				\hline
				$\theta$        &   \multicolumn{3}{c}{$n=400$} &&  \multicolumn{3}{c}{$n=800$} &&\multicolumn{3}{c}{$n=1600$} \\
				&   Mean &St. D.& C. P.&&   Mean &St. D.&C. P.&&  Mean &St. D.&C. P. \\
				\cline{2-4} \cline{6-8}\cline{10-12}
				$a_1=0.2$&0.1887&0.0429&0.9227&&0.1988&0.0147&0.9358&&0.2003&0.0071&0.9367\\
				$w_1=0.3$&0.2824&0.0447&0.9403&&0.2956&0.0184&0.9525&&0.2990&0.0090&0.9483\\
				$a_2=1.0$&0.9960&0.1347&0.9345&&0.9924&0.0661&0.9486&&0.9991&0.0337&0.9433\\
				$w_2=0.4$&0.4064&0.0373&0.9453&&0.4012&0.0209&0.9239&&0.4002&0.0110&0.9351\\ 	$a_3=1.8$&1.7824&0.0856&0.9236&&1.7919&0.0440&0.9413&&1.7960&0.0227&0.9392\\
				$w_3=0.3$&0.3113&0.0696&0.9221&&0.3031&0.0365&0.9429&&0.3008&0.0189&0.9420\\ 		
				\hline
		\end{tabular}
\end{table*}

Next we examine the test for the order of a PSD. Two models are employed for this experiment:
\begin{itemize}
	\item Model 3: $H_3=0.5\delta_{1-x}+0.5\delta_{1+x}$,
	\item Model 4: $H_4=0.25\delta_{0.5-x}+0.25\delta_{0.5+x}+0.25\delta_{1.5-x}+0.25\delta_{1.5+x}$,
\end{itemize}
where the parameter $x\in[0, 0.5)$ represents the distance between the null and alternative hypotheses. In particular, Model 3 is used for testing $H_0: d\leq 1$ (sphericity test) with $x$ ranging from 0 to 0.2 by a step 0.18 and Model 4 is for testing $H_0:d\leq 2$ with $x$ ranging from 0 to 0.45 by a step 0.05.
The sample size is taken as $n=400$, the dimension-sample size ratio is $c=1/2,1, 2$, and the significance level is fixed at $\alpha=0.05$. Results summarized in Table \ref{table3} show that the proposed test has accurate empirical size and its power tends to 1 as the parameter $x$ increases under the two models. Different from the sphericity test, the power for Model 2 declines significantly as the ratio $c$ increases. This phenomenon is consistent with that based on SCM depicted in \cite{Q16}.

\begin{table*}
	\caption{\label{table3}Empirical size and power of $T_n$ in percentage under Model 3 and Model 4 with the sample size $n=400$. The number of independent replications is 10,000 and the nominal significance level is $0.05$.}
	\centering
	\begin{tabular}{lcccccccccccc}
				\hline
				\multicolumn{11}{c}{$H_0: d\leq 1$ under Model 3}\\
				\hline
				$x$&0&0.02&0.04&0.06&0.08&0.10&0.12&0.14&0.16&0.18\\
				$c=\frac{1}{2}$&5.24&5.81&9.13&17.91&34.86&62.30&87.31&98.01&99.90&100\\
				$c=1$&5.33&5.92&8.43&18.09&35.62&63.12&88.14&98.69&99.96&100\\
				$c=2$&4.76&6.39&9.69&17.39&35.23&63.57&88.15&98.67&99.97&100\\ 	
				\hline
				\multicolumn{11}{c}{$H_0: d\leq 2$ under Model 4}\\
				\hline
				$x$&0&0.05&0.10&0.15&0.20&0.25&0.30&0.35&0.40&0.45\\
				$c=\frac{1}{2}$&4.75&7.19&17.49&43.96&79.28&97.06&99.87&100&100&100\\
				$c=1$&5.05&6.31&12.22&26.78&53.74&80.74&95.07&99.52&99.97&100\\
				$c=2$&4.88&5.65&8.56&16.33&30.09&49.17&71.60&86.54&95.20&98.61\\	
				\hline
		\end{tabular}
\end{table*}

\section{Proofs}
\label{sec:proofs}

\subsection{Some key lemmas}

We present three lemmas which form the core
basis for the proofs of Theorems \ref{lsd} and \ref{clt}.

\begin{lemma}\label{mom-y}
	Let $\bx=(x_1,\ldots,x_p)'\sim N_p(0, \T)$ where $\T=diag(\sigma_1^2,\ldots,\sigma_p^2)$ is a diagonal matrix with the spectral norm $||\T||$ bounded. Write $r_k=\sum_{i=1}^p\sigma_i^{2k}/p$, $k=1,2$. Then we have for $1\leq i\neq j\leq p$,
	\begin{eqnarray*}
		\E\left(\frac{x_i^2}{\sum_{i=1}^p x_i^2/p}\right)
		&=&\frac{\sigma_i^2}{r_{1}}+\frac{2\sigma_i^2 r_{2}-2\sigma_i^4 r_{1}}{p r_{1}^3}
		+o\bigg(\frac{1}{p}\bigg),\\
		\E\left(\frac{x_i^2x_j^2}{(\sum_{i=1}^p x_i^2/p)^2}\right)
		&=&\frac{\sigma_i^2\sigma_j^2}{ r_{1}^2}+\frac{6\sigma_i^2\sigma_j^2 r_{2}-4\sigma_i^2\sigma_j^2(\sigma_i^2+\sigma_j^2) r_{1}}{p r_{1}^4}
		+o\bigg(\frac{1}{p}\bigg)\\
		\E\left(\frac{x_i^4}{(\sum_{i=1}^p x_i^2/p)^2}\right)
		&=&\frac{3\sigma_i^4}{ r_{1}^2}+\frac{18\sigma_i^4 r_{2}-24\sigma_i^6 r_{1}}{p r_{1}^4}
		+o\bigg(\frac{1}{p}\bigg).
	\end{eqnarray*}
\end{lemma}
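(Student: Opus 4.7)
\medskip

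\noindent\textbf{Proof sketch.} The plan is a direct Taylor expansion of $1/S$ and $1/S^2$ around the deterministic limit of $S$, followed by explicit Gaussian moment computations. Write $x_i=\sigma_i z_i$ with $z_i$ i.i.d.\ standard normal, so that $S:=\sum_{i=1}^p x_i^2/p=\sum_{j=1}^p\sigma_j^2z_j^2/p$ and $\E S = r_1$. Set
$$D:=S-r_1=\frac{1}{p}\sum_{j=1}^p\sigma_j^2(z_j^2-1),$$
so $D$ has mean zero and $\E D^{2k}=O(p^{-k})$ by standard Gaussian moment bounds together with $\|\T\|$ bounded. The two expansions I will use are
$$\frac{1}{S}=\frac{1}{r_1}-\frac{D}{r_1^2}+\frac{D^2}{r_1^3}+R_1,\qquad \frac{1}{S^2}=\frac{1}{r_1^2}-\frac{2D}{r_1^3}+\frac{3D^2}{r_1^4}+R_2,$$
where $R_1,R_2$ can each be written in the form $D^3/(r_1^a S^b)$ for appropriate exponents.

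Next I will compute the Gaussian moments needed for the $D^0$, $D^1$, and $D^2$ contributions. Since $D=\sum_j\sigma_j^2(z_j^2-1)/p$, these reduce to elementary calculations using $\E z^{2m}=(2m-1)!!$ with careful separation of the diagonal indices. For instance, $\E(z_i^2D)=\E(z_i^2\sigma_i^2(z_i^2-1))/p=2\sigma_i^2/p$, while expanding $D^2$ one finds
$$\E(z_i^2 D^2)=\frac{2r_2}{p}+O(p^{-2}),\quad \E(z_i^2z_j^2 D)=\frac{2(\sigma_i^2+\sigma_j^2)}{p},\quad \E(z_i^4 D)=\frac{12\sigma_i^2}{p},$$
and similarly $\E(z_i^2z_j^2 D^2)=2r_2/p+O(p^{-2})$, $\E(z_i^4 D^2)=6r_2/p+O(p^{-2})$, with all off-diagonal terms vanishing by independence of the $z_k$ and the identity $\E(z_k^2-1)=0$. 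Plugging these into the expansions and multiplying by the appropriate $\sigma_i^a\sigma_j^b$ prefactors recovers each of the three stated formulas after grouping terms over the common denominator $pr_1^3$ or $pr_1^4$.

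The main technical point is controlling the remainders $\E R_1$ (multiplied by $z_i^2$) and $\E R_2$ (multiplied by $z_i^2z_j^2$ or $z_i^4$) at the $o(1/p)$ level. For this I will combine the Gaussian moment bound $\E D^{2k}=O(p^{-k})$ with a Cauchy--Schwarz argument against negative moments of $S$. Negative moments $\E(S^{-q})$ are uniformly bounded in $p$ because $S$ is a Gaussian chaos of order two that concentrates sharply around the lower-bounded quantity $r_1$: a simple Hanson--Wright type inequality (or a direct calculation using that $S$ is stochastically larger than a scaled $\chi^2_p/p$ after discarding vanishing variances) gives $\mathbb{P}(S<r_1/2)\le e^{-cp}$, which combined with the crude bound $S\ge \sigma_{(1)}^2 z_{(1)}^2/p$ on the complement suffices. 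Then $\E|R_1\cdot z_i^2|\le (\E D^6)^{1/2}(\E S^{-2}z_i^4)^{1/2}=O(p^{-3/2})=o(1/p)$, and analogously for $R_2$, which closes the proof.

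The only genuine obstacle is the bookkeeping of the $D^2$ expectations, since one must correctly identify which pairings among the indices in $D^2=p^{-2}\sum_{k,\ell}\sigma_k^2\sigma_\ell^2(z_k^2-1)(z_\ell^2-1)$ produce contributions of order $1/p$ versus $1/p^2$; only the diagonal $k=\ell$ terms with $k\notin\{i,j\}$ survive at the leading $1/p$ scale, the remaining finitely many diagonal and cross terms being absorbed into the $O(1/p^2)=o(1/p)$ remainder.
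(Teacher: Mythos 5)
Your proof is correct and follows essentially the same route as the paper's: an expansion of the reciprocal(s) of $S=\sum_i x_i^2/p$ around its deterministic value to second order, followed by elementary Gaussian moment computations (the paper expands $1/S^2$ in powers of $r_1^2-S^2$ rather than of $D=S-r_1$, a cosmetic difference, and all of your moment values $2\sigma_i^2/p$, $2r_2/p$, $2(\sigma_i^2+\sigma_j^2)/p$, $12\sigma_i^2/p$, $6r_2/p$ check out against the stated formulas). If anything, your handling of the remainder is more careful than the paper's, which simply writes $o_p(1/p)$ inside the expansion and then takes expectations --- a step that implicitly requires exactly the uniform control of negative moments of $S$ that you supply via the concentration bound $\mathbb{P}(S<r_1/2)\le e^{-cp}$.
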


\begin{proof}
As three expectations can be evaluated through a similar way, we only present the details for the second one as an illustration.
Replacing the denominator of the quantity inside the expectation by $ r_1^2$ and making their difference yields
\begin{eqnarray}
\frac{x_i^2x_j^2}{(\sum_{i=1}^p x_i^2/p)^2}-\frac{x_i^2x_j^2}{ r_1^2}
&=&\frac{x_i^2x_j^2\big[ p^2r_1^2-(\sum_{i=1}^p x_i^2)^2\big]}{p^2 r_1^4}+\frac{x_i^2x_j^2\big[ p^2r_1^2-(\sum_{i=1}^p x_i^2)^2\big]^2}{p^4 r_1^6}+o_p\bigg(\frac{1}{p}\bigg)\nonumber\\
&:=&\frac{A_p}{ r_1^4}+\frac{B_p}{ r_1^6}+o_p\bigg(\frac{1}{p}\bigg),\label{diff}
\end{eqnarray}
where
\begin{eqnarray*}
	A_p
	&=&\frac{x_i^2x_j^2}{p^2}\left[\bigg(\sum_{k\neq i,j} \sigma_k^2\bigg)^2-\bigg(\sum_{k\neq i,j} x_k^2\bigg)^2+2\left[(\sigma_i^2+\sigma_j^2)-(x_i^2+x_j^2)\right]\sum_{k\neq i,j} x_k^2\right]+o_p\bigg(\frac{1}{p}\bigg),\\
	B_p&=&\frac{x_i^2x_j^2}{p^4}\left[\bigg(\sum_{k\neq i,j} \sigma_k^2\bigg)^2-\bigg(\sum_{k\neq i,j} x_k^2\bigg)^2\right]^2+o_p\bigg(\frac{1}{p}\bigg).
\end{eqnarray*}
Taking expectations of $A_p$ and $B_p$, we get
\begin{eqnarray*}
	\E(A_p)
	&=&-\frac{2\sigma_i^2\sigma_j^2 r_{2}+4\sigma_i^2\sigma_j^2(\sigma_i^2+\sigma_j^2) r_{1}}{p}+o\bigg(\frac{1}{p}\bigg),\\
	\E(B_p)&=&\frac{8\sigma_i^2\sigma_j^2 r_{2} r_{1}^2}{p}+o\bigg(\frac{1}{p}\bigg),
\end{eqnarray*}
which combined with \eqref{diff} gives
\begin{eqnarray*}
	\E\left(\frac{x_i^2x_j^2}{(\sum_{i=1}^p x_i^2/p)^2}\right)
	&=&\frac{\E(x_i^2x_j^2)}{ r_1^2}+\frac{\E(A_p)}{ r_1^4}+\frac{\E(B_p)}{ r_1^6}+o\bigg(\frac{1}{p}\bigg)\\
	&=&\frac{\sigma_i^2\sigma_j^2}{ r_{1}^2}+\frac{6\sigma_i^2\sigma_j^2 r_{2}-4\sigma_i^2\sigma_j^2(\sigma_i^2+\sigma_j^2) r_{1}}{p r_{1}^4}
	+o\bigg(\frac{1}{p}\bigg).
\end{eqnarray*}
\end{proof}

\begin{lemma}\label{double-e}
	Let $\y=\sqrt{p}\bx/||\bx||$ where $\bx$ is as defined in Lemma \ref{mom-y} such that $\E(\y\y')=\Sigma$. For any $p\times p$ complex matrices $\C$ and $\tilde \C$ with bounded spectral norms,
	\begin{eqnarray*}
		&&\E\big(\y'\C\y-\tr\Sigma \C\big)\big(\y'\tilde \C\y-\tr\Sigma\tilde \C\big)\\
		&=&\tr\Sigma \C\Sigma \tilde \C'+\tr\Sigma \C\Sigma \tilde \C
		+\frac{2}{p}\bigg( \gamma_2\tr\Sigma \C\tr\Sigma \tilde \C-\tr\Sigma^2\C\tr\Sigma\tilde \C-\tr\Sigma \C\tr \Sigma^2\tilde \C\bigg)+o(p),
	\end{eqnarray*}
	where $ \gamma_2=\tr\Sigma^2/p$.
\end{lemma}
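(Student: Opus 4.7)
My plan is to exploit the sign-flip symmetry of the Gaussian vector $\bx$ (inherited by $\y$) to collapse the mixed fourth moment $\E(\y'\C\y\cdot\y'\tilde\C\y)$ into a sum that only involves $\E[\y_i^4]$ and $\E[\y_i^2\y_j^2]$, then substitute the asymptotic expansions of Lemma~\ref{mom-y}, and finally repackage the surviving terms into the trace form stated in the conclusion.

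\textbf{Step 1: Symmetry reduction.} Since $\bx\sim N_p(0,\T)$ with $\T$ diagonal, the distribution of $\bx$, and therefore of $\y$, is invariant under each sign flip $x_i\mapsto-x_i$. Hence $\E[\y_{i_1}\y_{i_2}\y_{i_3}\y_{i_4}]=0$ unless every index appears an even number of times. Expanding the product
\[
\y'\C\y\cdot\y'\tilde\C\y=\sum_{i,j,k,\ell}C_{ij}\tilde C_{k\ell}\y_i\y_j\y_k\y_\ell
\]
and collecting the four surviving index configurations ($i=j=k=\ell$; and three ways of forming two unequal pairs) yields
\[
\E[\y'\C\y\cdot\y'\tilde\C\y]=\sum_i C_{ii}\tilde C_{ii}\E[\y_i^4]+\sum_{i\neq j}C_{ii}\tilde C_{jj}\E[\y_i^2\y_j^2]+\sum_{i\neq j}(C_{ij}\tilde C_{ij}+C_{ij}\tilde C_{ji})\E[\y_i^2\y_j^2].
\]
Because $\Sigma=\E(\y\y')$ is diagonal, $\tr(\Sigma\C)\tr(\Sigma\tilde\C)=\sum_{i,j}C_{ii}\tilde C_{jj}\E[\y_i^2]\E[\y_j^2]$; subtracting this recasts the left-hand side of the lemma as a covariance in which the diagonal-times-diagonal contributions collapse onto $\E[\y_i^4]-\E[\y_i^2]^2$ and $\E[\y_i^2\y_j^2]-\E[\y_i^2]\E[\y_j^2]$.

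\textbf{Step 2: Substitution and leading-order matching.} Write $\alpha_i=\sigma_i^2/r_1$, so Lemma~\ref{mom-y} gives $\Sigma_{ii}=\alpha_i+O(1/p)$, $\E[\y_i^2\y_j^2]=\alpha_i\alpha_j+\eta_{ij}/p+o(1/p)$ and $\E[\y_i^4]=3\alpha_i^2+\xi_i/p+o(1/p)$ with explicit $\eta_{ij},\xi_i$. The leading terms in the third sum of Step~1 reconstitute $\sum_{i,j}C_{ij}\tilde C_{ij}\Sigma_{ii}\Sigma_{jj}+\sum_{i,j}C_{ij}\tilde C_{ji}\Sigma_{ii}\Sigma_{jj}=\tr(\Sigma\C\Sigma\tilde\C')+\tr(\Sigma\C\Sigma\tilde\C)$, after using the diagonal leading part $2\alpha_i^2$ of $\E[\y_i^4]-\E[\y_i^2]^2$ to restore the missing $i=j$ diagonal entries.

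\textbf{Step 3: Identifying the $1/p$ correction.} A direct calculation using the three formulas of Lemma~\ref{mom-y} gives
\[
\E[\y_i^2\y_j^2]-\E[\y_i^2]\E[\y_j^2]=\frac{2\sigma_i^2\sigma_j^2\bigl(r_2-(\sigma_i^2+\sigma_j^2)r_1\bigr)}{p\,r_1^4}+o(1/p)=\frac{2}{p}\alpha_i\alpha_j(\tilde\gamma_2-\alpha_i-\alpha_j)+o(1/p),
\]
where $\tilde\gamma_2=r_2/r_1^2=\gamma_2+O(1/p)$. Multiplying by $C_{ii}\tilde C_{jj}$, summing over $i\neq j$, and using the diagonality of $\Sigma$ to write $\sum_{i,j}C_{ii}\tilde C_{jj}\Sigma_{ii}\Sigma_{jj}=\tr(\Sigma\C)\tr(\Sigma\tilde\C)$ and analogous identities for the $\Sigma_{ii}^2\Sigma_{jj}$ sums, produces exactly
\[
\frac{2}{p}\bigl(\gamma_2\tr\Sigma\C\tr\Sigma\tilde\C-\tr\Sigma^2\C\tr\Sigma\tilde\C-\tr\Sigma\C\tr\Sigma^2\tilde\C\bigr).
\]
All remaining contributions (the $1/p$ correction of $\E[\y_i^4]-\E[\y_i^2]^2$ paired with $C_{ii}\tilde C_{ii}$, the $1/p$ correction of $\E[\y_i^2\y_j^2]$ paired with $C_{ij}\tilde C_{ij}+C_{ij}\tilde C_{ji}$, the substitution error $\alpha_i\to\Sigma_{ii}$ in the leading terms, and the extension of the $i\neq j$ sum to the full sum) are $O(1)$, hence $o(p)$ as required.

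\textbf{Main obstacle.} The critical point is the algebraic cancellation in Step~3: the $\alpha_i\epsilon_j+\alpha_j\epsilon_i$ subtracted from $\eta_{ij}$ must conspire with the Gaussian structure of $\bx$ to leave the clean factor $2\alpha_i\alpha_j(\gamma_2-\alpha_i-\alpha_j)$. One must also verify that the sums of the off-diagonal-type pieces $\sum_{ij}|C_{ij}\tilde C_{ij}|$ are controlled by $\|\C\|_F\|\tilde\C\|_F = O(p)$, so that these pieces are indeed absorbed into the $o(p)$ remainder while only the diagonal-index double sum $\sum_{i\neq j}C_{ii}\tilde C_{jj}$ can produce the $O(p)$ correction stated in the lemma.
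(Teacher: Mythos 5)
Your proposal is correct and follows essentially the same route as the paper's proof: both reduce the mixed fourth moment via sign-flip symmetry to the three index configurations weighted by $\E[y_i^4]$ and $\E[y_i^2y_j^2]$, substitute the expansions of Lemma \ref{mom-y}, and observe that only the $\sum_{i\neq j}C_{ii}\tilde C_{jj}$ block contributes the $O(1)$-per-term covariance $\E[y_i^2y_j^2]-\E[y_i^2]\E[y_j^2]=\tfrac{2}{p}\alpha_i\alpha_j(\gamma_2-\alpha_i-\alpha_j)+o(1/p)$ that reassembles into the stated $2/p$ trace correction. The only difference is organizational (you subtract $\E[y_i^2]\E[y_j^2]$ termwise whereas the paper expands $\E(\y'\C\y)(\y'\tilde\C\y)$ and $\tr\Sigma\C\,\tr\Sigma\tilde\C$ separately in terms of $\T$, $r_1$, $r_2$ and converts to $\Sigma$ at the end), which does not change the substance.
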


\begin{proof}
By symmetry, $\E(y_i^3y_j)=\E(y_i^2y_jy_k)=\E(y_iy_jy_ky_l)=0$ for $1\leq i\neq j\neq k\neq l\leq p$. Write $\C=(C_{ij})$ and $\tilde \C=(\tilde C_{ij})$, we thus get
\begin{equation}\label{eyy}
\E(\y' \C\y)(\y'\tilde \C\y)=\sum_{i=1}^pC_{ii}\tilde C_{ii}\E(y_i^4)+\sum_{i\neq j}(C_{ii}\tilde C_{jj}+C_{ij}\tilde C_{ij}+C_{ij}\tilde C_{ji})\E(y_i^2y_j^2).
\end{equation}
From Lemma 1, we have
\begin{eqnarray*}
	\sum_{i\neq j}C_{ii}\tilde C_{jj}\E(y_i^2y_j^2)
	&=&\frac{\tr\T \C\tr\T \tilde \C}{r_{1}^2}+\frac{6r_2\tr\T \C\tr\T \tilde \C}{pr_1^4}-\frac{4r_{1}\left(\tr\T^2\C\tr\T\tilde \C+\tr\T \C\tr \T^2\tilde \C\right)}{pr_{1}^4}\\
	&&-\frac{1}{3}\sum_{i=1}^pC_{ii}\tilde C_{ii}\E(y_i^4)+o(p),\\
	\sum_{i\neq j}C_{ij}\tilde C_{ij}\E(y_i^2y_j^2)
	&=&\frac{\tr\T \C\T \tilde \C'}{r_{1}^2}-\frac{1}{3}\sum_{i=1}^pC_{ii}\tilde C_{ii}\E(y_i^4)+o(p),\\
	\sum_{i\neq j}C_{ij}\tilde C_{ji}\E(y_i^2y_j^2)
	&=&\frac{\tr\T \C\T \tilde \C}{r_{1}^2}-\frac{1}{3}\sum_{i=1}^pC_{ii}\tilde C_{ii}\E(y_i^4)+o(p).
\end{eqnarray*}
From the above quantities and \eqref{eyy}, we obtain
\begin{eqnarray*}
	\E(\y' \C\y)(\y'\tilde \C\y)&=&\frac{\tr\T \C\tr\T \tilde \C+\tr\T \C\T \tilde \C'+\tr\T \C\T \tilde \C}{r_{1}^2}\\
	&&+\frac{6r_2\tr\T \C\tr\T \tilde \C-4r_{1}\left(\tr\T^2\C\tr\T\tilde \C+\tr\T \C\tr \T^2\tilde \C\right)}{pr_{1}^4}+o(p).
\end{eqnarray*}
On the other hand, from the first conclusion of Lemma 1,  one may derive that
\begin{equation}\label{sts}
\tr \Sigma \bM=\frac{\tr \T \bM}{r_1}+\frac{2r_2\tr\T \bM-2r_1\tr \T^2 \bM}{pr_1^3}+o(1)=\frac{\tr \T \bM}{r_1}+O(1),
\end{equation}
for any $p\times p$ matrix $\bM$ with bounded spectral norm,	which implies
\begin{eqnarray*}
	\tr \Sigma \C\tr \Sigma \tilde \C&=&\left(\frac{\tr \T \C}{r_1}+\frac{2r_2\tr\T \C-2r_1\tr \T^2 \C}{pr_1^3}\right)
	\left(\frac{\tr \T \tilde \C}{r_1}+\frac{2r_2\tr\T \tilde \C-2r_1\tr \T^2 \tilde \C}{pr_1^3}\right)+o(p)\\
	&=&\frac{\tr \T \C\tr \T \tilde \C}{r_1^2}+\frac{4r_2\tr \T \C\tr\T \tilde \C-2r_1(\tr\T \C\tr \T^2 \tilde \C+\tr\T \tilde \C\tr \T^2 \C)}{pr_1^4}+o(p).
\end{eqnarray*}
Therefore,
\begin{eqnarray*}
	E\big(\y'\C\y-\tr\Sigma \C\big)\big(\y'\tilde \C\y-\tr\Sigma\tilde \C\big)&=&\E(\y' \C\y)(\y'\tilde \C\y)-\tr \Sigma \C\tr \Sigma \tilde \C\\
	&=&\frac{2r_2\tr\T \C\tr\T \tilde \C-2r_{1}\left(\tr\T^2\C\tr\T\tilde \C+\tr\T \C\tr \T^2\tilde \C\right)}{pr_{1}^4}\\
	&&+\frac{\tr\T \C\T \tilde \C'+\tr\T \C\T \tilde \C}{r_{1}^2}+o(p).
\end{eqnarray*}
Finally, from \eqref{sts}, we may replace $\T$ with $r_1\Sigma$ and replace $r_2/r_1^2$ with $\tr(\Sigma^2)/p$  in the above expression and then obtain the result of the Lemma.
\end{proof}

Let $v_0 > 0$ be arbitrary,  $x_r$ any number greater than $\limsup_{p\rightarrow\infty}\lambda_{\max}^{\Sigma}(1+\sqrt{c})^2$, and $x_l$ any negative number if $\liminf_{p\rightarrow\infty}\lambda_{\min}^{\Sigma}(1-\sqrt{c})^2I_{(0,1)}(c)=0$, otherwise choose
$x_l\in(0,\liminf_{p\rightarrow\infty}\lambda_{\min}^{\Sigma}(1-\sqrt{c})^2)$. Define a contour $\mathcal C$ as
\begin{eqnarray}\label{cont}
\mathcal C= \{x\pm iv_0: x\in[x_l,x_r]\}\cup \{x+iv: x\in \{x_r,x_l\}, v\in[-v_0,v_0]\}.
\end{eqnarray}

Let $m_{0}(z)$ and $\um_0(z)$ be the Stieltjes transforms of $F^{c_n,  H_p}$ and $c_nF^{c_n,  H_p} + (1-c_n)\de_0$.
Our next aim is to study the fluctuation of the random process
$$
M_{n}(z)=p[m_{n}(z)-m_{0}(z)]=n[\um_{n}(z)-\um_{0}(z)],\quad z\in\mathcal C.
$$
For this, we define a truncated version $\widehat M_{n}(z)$ of $M_{n}(z)$ as
\begin{eqnarray}
\widehat M_{n}(z)=
\begin{cases}
M_{n}(z)&z\in \mathcal C_{n},\\
M_{n}(x+in^{-1}\varepsilon_n)& x\in \{x_l,x_r\}\quad\text{and}\quad v\in[0, n^{-1}\varepsilon_n],\\
M_{n}(x-in^{-1}\varepsilon_n)& x\in \{x_l,x_r\}\quad\text{and}\quad v\in[-n^{-1}\varepsilon_n, 0],
\end{cases}
\end{eqnarray}
where $ \mathcal C_{n}= \{x\pm iv_0: x\in[x_l,x_r]\}\cup \{x\pm iv: x\in\{x_l, x_r\}, v\in[n^{-1}\varepsilon_n, v_0]\}$ and the sequence $(\varepsilon_n)$ decreasing to zero satisfying  $\varepsilon_n>n^{-a}$ for some $a\in(0,1)$.

\begin{lemma}\label{clt-mn}
	Under Assumptions (a)-(c), the random process $\widehat M_n(\cdot)$ converges weakly to a two-dimensional Gaussian process $M(\cdot)$ satisfying for $z, z_1, z_2\in \mathcal C$,
	\begin{align}
	{\E}M(z)
	&=\int\frac{c(\um'(z)t)^2d H(t)}{\um(z)(1+\um(z) t)^3}+2c\um(z)\um'(z)\bigg[\int\frac{ \gamma_2t-t^2d H(t)}{1+\um(z) t}\int\frac{td H(t)}{(1+\um(z) t)^2}\nonumber\\
	&-\int\frac{td H(t)}{1+\um(z) t}\int\frac{t^2d H(t)}{(1+\um(z) t)^2}\bigg]\label{mean}
	\end{align}
	and covariance function
	\begin{align}
	&\Cov(M(z_1),M(z_2))\nonumber\\
	=&\frac{2\um'(z_1)\um'(z_2)}{(\um(z_1)-\um(z_2))^2}-\frac{2}{(z_1-z_2)^2}+\frac{2 \gamma_2}{c}\left(\um(z_1)+z_1\um'(z_1)\right)\left(\um(z_2)+z_2\um'(z_2)\right)\nonumber\\
	&-\frac{2}{c}\left(\frac{\um'(z_1)}{\um^2(z_1)}-1\right)\left(\um(z_2)+z_2\um'(z_2)\right)-\frac{2}{c}\left(\frac{\um'(z_2)}{\um^2(z_2)}-1\right)\left(\um(z_1)+z_1\um'(z_1)\right).\label{clt-cov}
	\end{align}
\end{lemma}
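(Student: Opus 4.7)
The plan is to follow the Bai-Silverstein (2004) martingale CLT strategy, with Lemma \ref{double-e} playing the role of the standard variance identity for quadratic forms and accounting for the extra correction terms intrinsic to the spatial-sign setting. Write
\begin{equation*}
\widehat M_n(z) = \widehat M_n^{\rm c}(z) + \widehat M_n^{\rm d}(z),\qquad \widehat M_n^{\rm c} = \widehat M_n - \E\widehat M_n,\quad \widehat M_n^{\rm d} = \E\widehat M_n.
\end{equation*}
The stochastic part $\widehat M_n^{\rm c}$ will yield the centered Gaussian limit with covariance \eqref{clt-cov}, and the deterministic part $\widehat M_n^{\rm d}$ will yield the mean \eqref{mean}. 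Truncation of $z$ to $\mathcal C_n$ controls the resolvent norm exactly as in BS04, facilitated by the deterministic bound $\|\y_j\|^2 = p$.

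For the stochastic part, introduce the filtration $\mathcal F_j = \sigma(\y_1,\ldots,\y_j)$, set $\E_j = \E[\,\cdot\mid \mathcal F_j]$, and decompose into martingale differences $\xi_j(z) = (\E_j - \E_{j-1})\tr(\B_n - zI)^{-1}$. Applying Sherman-Morrison to the rank-one decomposition $\B_n = \B_n^{(j)} + n^{-1}\y_j\y_j'$ and writing $\D_j = \B_n^{(j)} - zI$, one obtains
\begin{equation*}
\xi_j(z) = -(\E_j - \E_{j-1})\frac{n^{-1}\y_j'\D_j^{-2}(z)\y_j}{1 + n^{-1}\y_j'\D_j^{-1}(z)\y_j}.
\end{equation*}
After replacing the random denominator by $1 + n^{-1}\tr\Sigma\D_j^{-1}$ up to negligible error (standard concentration from Lemma \ref{double-e} applied with $\C = \tilde\C = \D_j^{-1}$), the predictable quadratic covariation $\sum_j \E_{j-1}[\xi_j(z_1)\xi_j(z_2)]$ is evaluated via Lemma \ref{double-e} with $\C = \D_j^{-2}(z_1)$, $\tilde\C = \D_j^{-2}(z_2)$. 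The two leading terms $\tr\Sigma\C\Sigma\tilde\C' + \tr\Sigma\C\Sigma\tilde\C$ reproduce the classical BS04 contribution $2\um'(z_1)\um'(z_2)/(\um(z_1)-\um(z_2))^2 - 2/(z_1-z_2)^2$ via the standard Stieltjes-transform manipulation. The additional $O(1/p)$ terms in Lemma \ref{double-e} involving $\gamma_2\tr\Sigma\C\tr\Sigma\tilde\C$, $\tr\Sigma^2\C\tr\Sigma\tilde\C$, and $\tr\Sigma\C\tr\Sigma^2\tilde\C$ then produce exactly the three extra summands carrying $\gamma_2$ and $\um'/\um^2$ in \eqref{clt-cov}. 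The Lyapunov fourth-moment condition $\sum_j \E|\xi_j|^4 \to 0$ follows from $\|\y_j\|^2 = p$ and uniform resolvent bounds on $\mathcal C_n$, so the finite-dimensional distributions converge to a centered Gaussian with the asserted covariance.

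For the deterministic bias, start from the Marchenko-Pastur fixed-point equation \eqref{mp} for $\um_0(z)$ and derive an approximate analogue for $\E\um_n(z)$ by taking expectations of the resolvent identity and substituting the Sherman-Morrison expansion together with Lemma \ref{double-e}. Subtracting the two equations and solving to order $1/n$ gives $\widehat M_n^{\rm d}$. The leading part reproduces the Gaussian-type BS04 bias $\int c(\um'(z) t)^2/[\um(z)(1+\um(z) t)^3]\,dH(t)$, the first integral in \eqref{mean}. The $O(1/p)$ corrections from Lemma \ref{double-e} — specifically those contributing $\gamma_2\tr\Sigma$ and $\tr\Sigma^2$-type terms to the conditional variance of $\y_j'\D_j^{-1}\y_j$ — generate the second bracketed term involving $\gamma_2 t - t^2$. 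Tightness of $\widehat M_n$ on $\mathcal C$ is then routine: $\E|\widehat M_n(z_1) - \widehat M_n(z_2)|^2 \le K|z_1 - z_2|^2$ follows from Cauchy-Schwarz and the resolvent identity.

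The principal technical obstacle is the careful bookkeeping of the $O(1/p)$ corrections from Lemma \ref{double-e} through both the predictable quadratic variation and the bias expansion. In the BS04 setting with i.i.d.\ entries, such corrections simply do not exist beyond the fourth-cumulant term, and the CLT formulas involve only the leading $\tr\Sigma\C\Sigma\tilde\C$-type contributions. Here the spherical normalization $\y_j = \sqrt{p}\A\bu_j/\|\A\bu_j\|$ couples the coordinates of $\y_j$ in a way that generates genuine $O(1/p)$ corrections which survive the large-$n$ limit. Translating these corrections into Stieltjes-transform expressions that collapse to the stated $\gamma_2$- and $\um^{-2}$-dependent summands in \eqref{mean} and \eqref{clt-cov} is the new technical content of the argument, and requires systematic use of the auxiliary identity $\tr\Sigma\bM = \tr\T\bM/r_1 + O(1)$ appearing in the proof of Lemma \ref{double-e}.
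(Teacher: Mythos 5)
Your proposal is correct and follows essentially the same route as the paper: the same split into a centered martingale part and a deterministic bias part, the same Sherman--Morrison martingale-difference decomposition with the martingale CLT, Lemma \ref{double-e} supplying both the classical quadratic-variation terms and the $O(1/p)$ corrections that generate the new $\gamma_2$- and $\um^{-2}$-dependent summands, and a fixed-point comparison for the bias. The only understatement is the tightness step, which in the paper requires the high-moment quadratic-form bounds of Lemma \ref{ineq}, the resolvent-norm control of Lemma \ref{lambda-bound}, and a further martingale decomposition of the difference quotient rather than just Cauchy--Schwarz, but this is standard Bai--Silverstein machinery and not a gap in the approach.
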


\begin{proof}

Split $\widehat M_{n}(z)$ into two parts,  $\widehat M_{n}(z) = M_{n}^{(1)}(z) + M_{n}^{(2)}(z)$, where
$$M_{n}^{(1)}(z) = p[m_{n} (z) - \E m_{n} (z)]\quad \text{and} \quad M_{n}^{(2)}(z) = p[\E m_{n} (z)-m_{0}(z)].$$
Following the strategy in \cite{BS04}, we prove the convergence of $\widehat M_{n}(z)$ by three steps:
\begin{itemize}
	\item[] Step 1: Finite dimensional convergence of $M_{n}^{(1)}(z)$ in distribution;
	\item[] Step 2: Tightness of $M_{n}^{(1)}(z)$ on $\mathcal C_{n}$;
	\item[] Step 3: Convergence of $M_{n}^{(2)}(z)$.
\end{itemize}

Without loss of generality, we assume $\|\Sigma\|\leq1$ for all $p$.
Constants appearing in inequalities will be denoted by $K$  which may take different values from one expression to the next.

\textit{Step 1: Finite dimensional convergence of $M_{n}^{(1)}(z)$ in distribution.}
We show in this part, for any $w$ complex numbers
$z_{1},\ldots, z_w \in \mathcal C_{n}$, the random vector
\begin{equation}\label{rv}
\left[M_{n}^{(1)}(z_1),\ldots,M_{n}^{(1)}(z_w)\right]
\end{equation}
converges in distribution to a Gaussian vector. We begin with introducing some notation which will be frequently used in the sequel.
\begin{eqnarray*}
	&&\br_j=(1/\sqrt{n})\y_j,\quad \D(z)=\B_{n}-zI,\\
	&& \D_{j}(z)=\D (z)-\br_j\br'_j,\quad \D_{ij}(z)=\D (z)-\br_i\br_i'-\br_j\br'_j,\\
	&&\varepsilon_{j}(z)=\br_j'\D_{j}^{-1}(z)\br_j-\frac{1}{n}\tr\Sigma\D_{j}^{-1}(z),\quad \delta_{j}(z)=\br_j'\D_{j}^{-2}(z)\br_j-\frac{1}{n}\tr\Sigma \D_{j}^{-2}(z),\\
	&&\beta_{j}(z)=\frac{1}{1+\br_j'\D_{j}^{-1}(z)\br_j},\quad \bar \beta_{j}(z)=\frac{1}{1+n^{-1}\tr\Sigma \D_{j}^{-1}(z)},\\
	&& b_{n}(z)=\frac{1}{1+n^{-1}\E\tr\Sigma \D_{j}^{-1}(z)}.
\end{eqnarray*}
Note that, for any $z=u+iv\in \mathbb C^+$, the last three quantities are bounded in absolute value by $|z|/v$.

Let $\E_0(\cdot)$ denote expectation and $\E_j(\cdot)$ denote conditional expectation with respect to the $\sigma$-field generated by $\br_1,\ldots,\br_j$.
From the martingale decomposition and the identity
\begin{equation}\label{D-D}
\D ^{-1}(z)-\D_{j}^{-1}(z)=-\D_{j}^{-1}(z)\br_j\br_j'\D_{j}^{-1}(z)\beta_{j}(z),
\end{equation}
we have
\begin{eqnarray}
M_{n}^{(1)}(z)&=&\tr(\D ^{-1}(z)-\E\D ^{-1}(z))\nonumber\\
&=&\sum_{j=1}^n\tr \E_j\D ^{-1}(z)-\tr \E_{j-1}\D ^{-1}(z)\nonumber\\
&=&\sum_{j=1}^n\tr \E_j[\D ^{-1}(z)-\D_{j}^{-1}(z)]-\tr \E_{j-1}[\D ^{-1}(z)-\D_{j}^{-1}(z)]\nonumber\\
&=&-\sum_{j=1}^n(\E_j-\E_{j-1})\beta_{j}(z)\br_j'\D_{j}^{-2}\br_j.\label{md}
\end{eqnarray}
Writing $\beta_{j}(z)=\bar\beta_{j}(z)-\bar\beta_{j}(z)\beta_{j}(z)\varepsilon_{j}(z)=\bar \beta_{j}(z)-\bar\beta_{j}^2\varepsilon_{j}(z)+\bar\beta_{j}^2(z)\beta_{j}(z)\varepsilon_{j}^2(z)$, we have
\begin{eqnarray*}
	&&(\E_j-\E_{j-1})\beta_{j}(z)\br_j'\D_{j}^{-2}\br_j\\
	&=&(\E_j-\E_{j-1})\left(\bar\beta_{j}(z)\delta_{j}(z)-\bar\beta_{j}^2(z)\varepsilon_{j}(z)\br_j'\D_{j}^{-2}(z)\br_j+\bar\beta_{j}^2(z)\beta_{j}(z)\varepsilon_{j}^2(z)\br_j'\D_{j}^{-2}(z)\br_j\right)\\
	&=&\frac{d}{dz}\E_j\bar\beta_{j}(z)\varepsilon_{j}(z)-(\E_j-\E_{j-1})\bar\beta_{j}^2(z)\left(\varepsilon_{j}(z)\delta_{j}(z)-\beta_{j}(z)\varepsilon_{j}^2(z)\br_j'\D_{j}^{-2}(z)\br_j\right).
\end{eqnarray*}
Note that
\begin{eqnarray*}
	\E\bigg|\sum_{j=1}^n(\E_j-\E_{j-1})\bar\beta_{j}^2(z)\varepsilon_{j}(z)\delta_{j}(z)\bigg|^2&=&\sum_{j=1}^n\E|(\E_j-\E_{j-1})\bar\beta_{j}^2(z)\varepsilon_{j}(z)\delta_{j}(z)|^2\\
	&\leq&4\sum_{j=1}^n\E|\bar\beta_{j}^2(z)\varepsilon_{j}(z)\delta_{j}(z)|^2\\
	&\leq&\frac{4|z|^4}{v^4}\sum_{j=1}^nE^\frac{1}{2}|\varepsilon_{j}(z)|^4E^\frac{1}{2}|\delta_{j}(z)|^4
\end{eqnarray*}
which is $o(1)$ from Lemma \ref{ineq}. Similarly, $\E|\sum_{j=1}^n(\E_j-\E_{j-1})\bar\beta_{j}^2(z)\beta_{j}(z)\varepsilon_{j}^2(z)\br_j'\D_{j}^{-2}(z)\br_j|^2 =o(1)$.
Thus we get
$$
\sum_{j=1}^n(\E_j-\E_{j-1})\bar\beta_{j}^2(z)\left(\varepsilon_{j}(z)\delta_{j}(z)+\beta_{j}(z)\varepsilon_{j}^2(z)\br_j'\D_{j}^{-2}(z)\br_j\right)=o_p(1)
$$
which implies that we need only to consider the limiting distribution of
$$
-\frac{d}{dz}\sum_{j=1}^n \E_j\bar\beta_{j}(z)\varepsilon_{j}(z)=-\frac{d}{dz}\sum_{j=1}^n (\E_j-\E_{j-1})\bar\beta_{j}(z)\varepsilon_{j}(z)
$$
in finite dimensional situations.
For any $\epsilon>0$,
\begin{eqnarray*}
	&&  \sum_{j=1}^n\E\left|\E_j\frac{d}{dz}\varepsilon_{j}(z)\bar\beta_{j}(z)\right|^2 I_{\left(\left|\E_j\frac{d}{dz}\varepsilon_{j}(z)\bar\beta_{j}(z)\right|\geq\epsilon\right)}\\
	&\leq&\frac{1}{\epsilon^2}\sum_{j=1}^n \E\Big|\E_j\frac{d}{dz}\varepsilon_{j}(z)\bar\beta_{j}(z)\Big|^4
	\leq\frac{K}{\varepsilon^2}\sum_{j=1}^n
	\left(\frac{|z|^4\E|\delta_{j}(z)|^4}{v^4}+\frac{|z|^8p^4\E|\varepsilon_{j}(z)|^4}{v^{16}n^4}\right)
\end{eqnarray*}
which tends to zero according to Lemma \ref{ineq} and thus verifies the Lyapunov condition.
Therefore, from the martingale CLT (Lemma \ref{MCLT}),
the random vector in \eqref{rv} will tend to a Gaussian vector
$(M^{(1)}(z_1),\ldots, M^{(1)}(z_w))$ with covariance function
\begin{equation}\label{lcov}
Cov(M^{(1)}(z_1),M^{(1)}(z_2))=\lim_{n\to\infty}\sum_{j=1}^n \frac{\partial^2}{\partial z_1\partial z_2}\E_{j-1}\left(\E_j\varepsilon_{j}(z_1)\bar\beta_{j}(z_1)\cdot
\E_j\varepsilon_{j}(z_2)\bar\beta_{j}(z_2)\right),
\end{equation}
provided this limit exits. By the same arguments in page 571 of \cite{BS04}, it is sufficient to show that
\begin{equation}\label{lcov-1}
\sum_{j=1}^n\E_{j-1}\prod_{k=1}^2\E_j\bar \beta_j(z_k)\varepsilon_j(z_k)
\end{equation}
converges in probability. Since
\begin{eqnarray}
E|\bar\beta_j(z)-b_n(z)|^2&=&|b_n(z)|^2n^{-2}\E|\bar\beta_1(z)(\tr\Sigma \D_1^{-1}(z)-E\tr\Sigma \D_1^{-1}(z))|^2\nonumber\\
&\leq&\frac{|z|^4}{n^2v^4}\E\bigg|\sum_{k=2}^n(\E_k-\E_{k-1})\tr(\D_1^{-1}(z)-\D_{1k}^{-1}(z))\bigg|^2\nonumber\\
&\leq&\frac{K|z|^4}{n^2v^4}\E\sum_{k=2}^n\big|\tr(\D_1^{-1}(z)-\D_{1k}^{-1}(z))\big|^2\nonumber\\
&\leq&\frac{K|z|^4}{v^6n},\label{beta-b}
\end{eqnarray}
where the last inequality is from
\begin{equation}\label{d-d}
|\tr(\D ^{-1}(z)-\D_{j}^{-1}(z))\bM|\leq\frac{||\bM||}{v},
\end{equation}
for any $p\times p$ matrix $\bM$, see Lemma 2.6 in \cite{SB95}.
Moreover,
from the definition of $\um_0(z)$ and discussions in Page 439 in \cite{BZ08},
we also have
\begin{equation}\label{bnz}
b_{n}(z)+z\um_0(z)\rightarrow 0.
\end{equation}
It is hence sufficient to
study the convergence of
\begin{equation}\label{cov-term-2}
z_1z_2\um_0(z_1)\um_0(z_2)\sum_{j=1}^n\E_{j-1}\left(\E_j\varepsilon_j(z_1)\E_j\varepsilon_j(z_2)\right),
\end{equation}
whose second mixed partial derivative yields the limit of \eqref{lcov}.
From Lemma 2, we know that
\begin{equation}\label{cov-term-3}
\eqref{cov-term-2}= 2(T_1+ \gamma_{n2}T_2-T_3-T_4),
\end{equation}
where
\begin{eqnarray*}
	T_1&=&\frac{z_1z_2\um_0(z_1)\um_0(z_2)}{n^2}\sum_{j=1}^n\tr\left[\E_j\Sigma \D_{j}^{-1}(z_1)\E_j(\Sigma \D_{j}^{-1}(z_2))\right],\\
	T_2&=&\frac{z_1z_2\um_0(z_1)\um_0(z_2)}{pn^2}\sum_{j=1}^n\tr\left[\E_j\Sigma \D_{j}^{-1}(z_1)\right]\tr\left[\E_j\Sigma \D_{j}^{-1}(z_2)\right],\\
	T_3&=&\frac{z_1z_2\um_0(z_1)\um_0(z_2)}{pn^2}\sum_{j=1}^n\tr\left[\E_j\Sigma^2 \D_{j}^{-1}(z_1)\right]\tr\left[\E_j\Sigma \D_{j}^{-1}(z_2)\right],\\
	T_4&=&\frac{z_1z_2\um_0(z_1)\um_0(z_2)}{pn^2}\sum_{j=1}^n\tr\left[\E_j\Sigma \D_{j}^{-1}(z_1)\right]\tr\left[\E_j\Sigma^2 \D_{j}^{-1}(z_2)\right].
\end{eqnarray*}

Now we consider the limit of $T_1$. Let
\begin{eqnarray*}
	\beta_{ij}(z)=\frac{1}{1+\br_i'\D_{ij}^{-1}(z)\br_i},\quad b_{1}(z)=\frac{1}{1+n^{-1}\E\tr \Sigma \D_{12}^{-1}(z)},\quad \bL(z)=zI-\frac{n-1}{n}b_1(z)\Sigma.
\end{eqnarray*}
Note that
\begin{equation}\label{z-b}
||\bL(z)||^{-1}\leq \frac{|b_1^{-1}(z)|}{\Im(zb_1^{-1}(z))}\leq\frac{|b_1^{-1}(z)|}{\Im(z)}\leq\frac{1+p/(nv)}{v}.
\end{equation}
From the equality $\br_i'\D_j^{-1}(z)=\beta_{ij}(z)\br_i'\D_{ij}^{-1}(z)$, we get
\begin{eqnarray}
\D_j^{-1}(z)+\bL^{-1}(z)&=&\bL^{-1}(z)\left(\D_j(z)+\bL(z)\right)\D_j^{-1}(z)\nonumber\\
&=&\bL^{-1}(z)\left(\sum_{i\neq j}\br_i\br_i'-\frac{n-1}{n}b_1(z)\Sigma\right)\D_j^{-1}(z)\nonumber\\
&=&\bL^{-1}(z)\left(\sum_{i\neq j}\br_i\beta_{ij}(z)\br_i'\D_{ij}^{-1}(z)-\frac{n-1}{n}b_1(z)\Sigma \D_j^{-1}(z)\right)\nonumber\\
&=&b_1(z)\bR_1(z)+\bR_2(z)+\bR_3(z),\label{dj-1}
\end{eqnarray}
where
\begin{eqnarray*}
	\bR_1(z)&=&\sum_{i\neq j}\bL^{-1}(z)(\br_i\br_i'-n^{-1}\Sigma)\D_{ij}^{-1}(z),\\
	\bR_2(z)&=&\sum_{i\neq j}(\beta_{ij}(z)-b_1(z))\bL^{-1}(z)\br_i\br_i'\D_{ij}^{-1}(z),\\
	\bR_3(z)&=&n^{-1}b_1(z)\bL^{-1}(z)\Sigma \sum_{i\neq j}\left(\D_{ij}^{-1}(z)-\D_{j}^{-1}(z)\right).
\end{eqnarray*}

For any $p\times p$ matrix $\bM$, let $|||\bM|||$ denote a non-random upper bound for the spectral norm of $\bM$.
From Lemma \ref{ineq}, \eqref{d-d}, and \eqref{z-b}, we get
\begin{eqnarray}
\E|\tr \bR_1(z)\bM|&\leq& n\E^{1/2}|\br_1'\D_{12}^{-1}(z)\bM\bL^{-1}(z)\br_1-n^{-1}\tr\Sigma \D_{12}^{-1}(z)\bM\bL^{-1}(z)|^2\nonumber\\
&\leq&n^{1/2}K||\bM||\frac{(1+p/(nv))}{v^2}\label{AM},\\
\E|\tr \bR_2(z)\bM|&\leq& n\E^{1/2}(|\beta_{12}(z)-b_1(z)|^2)\E^{1/2}\bigg|\br_1'\D_{12}^{-1}\bM\bL^{-1}(z)\br_1\bigg|^2\nonumber\\
&\leq&n^{1/2} K |||\bM|||\frac{|z|^2(1+p/(nv))}{v^5},\label{BM}\\
|\tr \bR_3(z)M|&\leq&|||\bM|||\frac{|z|(1+p/(nv))}{v^3},\label{CM}
\end{eqnarray}
where the matrix $M$ in the first two inequalities  is assumed nonrandom.

Using the equality \eqref{D-D} we write
\begin{equation}\label{R123}
\tr \E_j(\bR_1(z_1)) \Sigma \D_j^{-1}(z_2)\Sigma=R_{11}(z_1,z_2)+R_{12}(z_1,z_2)+R_{13}(z_1,z_2),
\end{equation}
where
\begin{eqnarray*}
	R_{11}(z_1,z_2)&=&-\sum_{i< j}\beta_{ij}(z_2)\br_i'\E_j(\D_{ij}^{-1}(z_1))\Sigma \D_{ij}^{-1}(z_2)\br_i\br_i'\D_{ij}^{-1}(z_2)\Sigma \bL^{-1}(z_1)\br_i,\\
	R_{12}(z_1,z_2)&=&-\tr\sum_{i<j}L^{-1}(z_1)n^{-1}\Sigma \E_j(\D_{ij}^{-1}(z_1))\Sigma(\D_j^{-1}(z_2)-\D_{ij}^{-1}(z_2))\Sigma,\\
	R_{13}(z_1,z_2)&=&\tr\sum_{i<j}L^{-1}(z_1)(\br_i\br_i'-n^{-1}\Sigma)\E_j(\D_{ij}^{-1}(z_1))\Sigma \D_{ij}^{-1}(z_2)\Sigma.
\end{eqnarray*}
From \eqref{d-d} and \eqref{z-b} we get $|R_{12}(z_1,z_2)|\leq (1+p/(nv))/v^3$ and $\E|R_{13}(z_1,z_2)|\leq n^{1/2}(1+p/(nv))/v^3$.
Using Lemma \ref{ineq} we have, for $i<j$,
\begin{eqnarray*}
	&&\E\bigg| \beta_{ij}(z_2)\br_i'\E_j(\D_{ij}^{-1}(z_1))\Sigma \D_{ij}^{-1}(z_2)\br_i\br_i'\D_{ij}^{-1}(z_2)\Sigma \bL^{-1}(z_1)\br_i\\
	&&-b_1(z_2)n^{-2}\tr \left(\E_j(\D_{ij}^{-1}(z_1))\Sigma \D_{ij}^{-1}(z_2)\Sigma\right) \tr\left( \D_{ij}^{-1}(z_2)\Sigma \bL^{-1}(z_1)\T\right)\bigg|\leq Kn^{-1/2},
\end{eqnarray*}
and by \eqref{d-d},
\begin{eqnarray*}
	&&\bigg|\tr \left(\E_j(\D_{ij}^{-1}(z_1))\Sigma \D_{ij}^{-1}(z_2)\Sigma\right) \tr \left(\D_{ij}^{-1}(z_2)\Sigma \bL^{-1}(z_1)\Sigma\right)\\
	&&-\tr\left( \E_j(\D_{j}^{-1}(z_1))\Sigma \D_{j}^{-1}(z_2)\Sigma \right)\tr\left( \D_{j}^{-1}(z_2)\Sigma \L^{-1}(z_1)\Sigma\right)\bigg|\leq Kn.
\end{eqnarray*}
These imply that
\begin{align}
&\E\bigg|R_{11}(z_1,z_2)+\frac{j-1}{n^2}b_1(z_2)\tr\left( \E_j(\D_{j}^{-1}(z_1))\Sigma \D_{j}^{-1}(z_2)\Sigma \right)\tr\left( \D_{j}^{-1}(z_2)\Sigma \bL^{-1}(z_1)\T\right)\bigg|\label{az12}\\
&\leq Kn^{1/2}.\nonumber
\end{align}
Therefore, from \eqref{dj-1}-\eqref{az12},
\begin{eqnarray*}
	&&\tr\left( \E_j(\D_{j}^{-1}(z_1))\Sigma \D_{j}^{-1}(z_2)\Sigma \right)\left(1+\frac{j-1}{n^2}b_1(z_1)b_1(z_2)\tr\left( \D_{j}^{-1}(z_2)\Sigma \bL^{-1}(z_1)\Sigma\right)\right)\\
	&=&-\tr \bL^{-1}(z_1)\Sigma \D_j^{-1}(z_2)\Sigma +R_{14}(z_1,z_2),
\end{eqnarray*}
where $\E|R_{14}(z_1,z_2)|\leq Kn^{1/2}$. From this and applying \eqref{dj-1}-\eqref{az12} again, we get
\begin{align}\label{R15}
&\tr\left( \E_j(\D_{j}^{-1}(z_1))\Sigma \D_{j}^{-1}(z_2)\Sigma \right)\left(1-\frac{j-1}{n^2}b_1(z_1)b_1(z_2)\tr\left( \bL^{-1}(z_2)\Sigma \bL^{-1}(z_1)\Sigma\right)\right)\\
&=\tr \bL^{-1}(z_1)\Sigma \bL^{-1}(z_2)\Sigma +R_{15}(z_1,z_2),\nonumber
\end{align}
where $\E|R_{15}(z_1,z_2)|\leq Kn^{1/2}$.

From \eqref{bnz} and \eqref{R15}, we obtain that
\begin{align}
&\tr\left( \E_j(\D_{j}^{-1}(z_1))\Sigma \D_{j}^{-1}(z_2)\Sigma \right)\left(1-\frac{j-1}{n^2}\um_{0}(z_1)\um_{0}(z_2)\right.\nonumber\\
&\times\tr\left( (I+\um_{0}(z_2)\Sigma)^{-1}\Sigma  (I+\um_{0}(z_1)\Sigma)^{-1}\Sigma\right)\bigg)\nonumber\\
&=\tr\left( \E_j(\D_{j}^{-1}(z_1))\Sigma \D_{j}^{-1}(z_2)\Sigma \right)\left(1-\frac{j-1}{n}\int\frac{c_n\um_{0}(z_1)\um_{0}(z_2)t^2d H_p(t)}{(1+t\um_{0}(z_1))(1+t\um_{0}(z_2))}\right)\nonumber\\
&=\frac{nc_n}{z_1z_2}\int\frac{t^2d H_p(t)}{(1+t\um_{0}(z_1))(1+t\um_{0}(z_2))} +R_{16}(z_1,z_2).\label{var-2}
\end{align}
Here $\E|R_{16}(z_1,z_2)|\leq Kn^{1/2}$. Letting
$$
a_n(z_1,z_2)=\int\frac{c_n\um_{0}(z_1)\um_{0}(z_2)t^2d H_p(t)}{(1+t\um_{0}(z_1))(1+t\um_{0}(z_2))},
$$
we get
\begin{eqnarray*}
	T_1=\frac{1}{n}\sum_{j=1}^n\frac{a_n(z_1,z_2)}{1-((j-1)/n)a_n(z_1,z_2)}+o_p(1)\xrightarrow{i.p.}\int_0^{a(z_1,z_2)}\frac{1}{1-z}dz,
\end{eqnarray*}
where
\begin{eqnarray*}
	a(z_1,z_2)=\int\frac{c\um(z_1)\um(z_2)t^2d H(t)}{(1+t\um(z_1))(1+t\um(z_2))}=1+\frac{\um(z_1)\um(z_2)(z_1-z_2)}{\um(z_2)-\um(z_1)}.
\end{eqnarray*}
Elementary calculations reveal that
\begin{equation}\label{p-t1}
\frac{\partial^2T_1}{\partial z_1\partial z_2}=\frac{\um'(z_1)\um'(z_2)}{(\um(z_1)-\um(z_2))^2}-\frac{1}{(z_1-z_2)^2}.
\end{equation}

Now we derive the limits of $T_2$, $T_3$, $T_4$ and their second mixed partial derivatives.  From \eqref{bnz}, \eqref{dj-1}-\eqref{CM}, it's easy to show that
\begin{eqnarray*}
	\tr \E_j\D_j^{-1}(z_1)\Sigma\tr \E_j\D_j^{-1}(z_2)\Sigma
	&=&\frac{p^2}{z_1z_2}\int\frac{td H_p(t)}{1+t\um_{0}(z_1)}\int\frac{td H_p(t)}{1+t\um_{0}(z_2)}+R_{17}(z_1,z_2),\\
	\tr \E_j\D_j^{-1}(z_1)\Sigma^2\tr \E_j\D_j^{-1}(z_2)\Sigma
	&=&\frac{p^2}{z_1z_2}\int\frac{t^2d H_p(t)}{1+t\um_{0}(z_1)}\int\frac{td H_p(t)}{1+t\um_{0}(z_2)}+R_{18}(z_1,z_2),
\end{eqnarray*}
where $\E|R_{17}(z_1,z_2)|\leq Kn$ and $\E|R_{18}(z_1,z_2)|\leq Kn$. We thus get
\begin{eqnarray*}
	T_2&=&c_n\int\frac{t\um_0(z_1)d H_p(t)}{1+t\um_{0}(z_1)}\int\frac{t\um_0(z_2)d H_p(t)}{1+t\um_{0}(z_2)}+o_p(1)\xrightarrow{i.p.}\frac{1}{c}\left(1+z_1\um(z_1)\right)\left(1+z_2\um(z_2)\right),\\
	T_3&=&c_n\int\frac{t^2\um_0(z_1)d H_p(t)}{1+t\um_{0}(z_1)}\int\frac{t\um_0(z_2)d H_p(t)}{1+t\um_{0}(z_2)}+o_p(1)\xrightarrow{i.p.}\int\frac{t^2\um(z_1)\left(1+z_2\um(z_2)\right)d H_p(t)}{1+t\um(z_1)},\\
	T_4&=&c_n\int\frac{t\um_0(z_1)d H_p(t)}{1+t^2\um_{0}(z_1)}\int\frac{t\um_0(z_2)d H_p(t)}{1+t\um_{0}(z_2)}+o_p(1)\xrightarrow{i.p.}\int\frac{t^2\um(z_2)\left(1+z_1\um(z_1)\right)d H_p(t)}{1+t\um(z_2)}.
\end{eqnarray*}
Their corresponding derivatives are
\begin{eqnarray}
\frac{\partial^2T_2}{\partial z_1\partial z_2}&=&\frac{1}{c}\left(\um(z_1)+z_1\um'(z_1)\right)\left(\um(z_2)+z_2\um'(z_2)\right),\\
\frac{\partial^2T_3}{\partial z_1\partial z_2}&=&\int\frac{t^2\um'(z_1)\left(\um(z_2)+z_2\um'(z_2)\right)d H_p(t)}{(1+t\um(z_1))^2}\nonumber\\
&=&\frac{1}{c}\left(\frac{\um'(z_1)}{\um^2(z_1)}-1\right)\left(\um(z_2)+z_2\um'(z_2)\right),\\
\frac{\partial^2T_4}{\partial z_1\partial z_2}&=&\int\frac{t^2\um'(z_2)\left(\um(z_1)+z_1\um'(z_1)\right)d H_p(t)}{(1+t\um(z_2))^2},\nonumber\\
&=&\frac{1}{c}\left(\frac{\um'(z_2)}{\um^2(z_2)}-1\right)\left(\um(z_1)+z_1\um'(z_1)\right),\label{p-t4}
\end{eqnarray}
respectively.

Collecting results in \eqref{cov-term-3}, \eqref{p-t1}-\eqref{p-t4}, we finally get the covariance function in the lemma.

\textit{Step 2: Tightness of $M_n^{(1)}(z)$.}
From the arguments in \cite{BS04}, the tightness of $M_n^{(1)}(z)$ can be established by verifying the moment condition:
\begin{equation}\label{tightness}
\sup_{n,z_1,z_2\in \mathcal C_n}\frac{\E|M_n^{(1)}(z_1)-M_n^{(1)}(z_2)|^2}{|z_1-z_2|^2}<\infty.
\end{equation}
We first claim that moments of $\D^{-1}(z)$, $\D^{-1}_j(z)$ and $\D^{-1}_{ij}(z)$ are all bounded in $n$ and $z\in \mathcal C_n$. Taking $\D^{-1}(z)$ for example,
it's clear that $\E||\D^{-1}(z)||^q<1/v_0^q$ for $z\in \mathcal C_u$. For $z\in \mathcal C_l\cup C_r$, applying Lemma \ref{lambda-bound} with suitably large $s$,
\begin{eqnarray*}
	\E||\D^{-1}(z)||^q&\leq& K_1+\frac{1}{v^q}P(||\B_n||>\eta_r\ \text{or}\ \lambda_{\min}^{\B_n}<\eta_l)\\
	&\leq& K_1+K_2n^q\varepsilon^{-q}n^{-s}\leq K,
\end{eqnarray*}
where the two constant $\eta_r$ and $\eta_l$ satisfy
$\limsup_{n,p\rightarrow\infty}\lambda_{\max}^{\Sigma}(1+\sqrt{c})^2<\eta_r<x_r$ and $x_l<\eta_l<\liminf_{n,p\rightarrow\infty}\lambda_{\min}^{\Sigma}I_{(0,1)}(c)(1-\sqrt{c})^2$.
Therefore for any positive $q$, we may assume that
\begin{equation}\label{D-bound}
\max \{ \E||\D^{-1}(z)||^q, \E||\D^{-1}_j(z)||^q, \E||\D^{-1}_{ij}(z)||^q\}\leq K_q.
\end{equation}
Using the above argument, we can extend the inequality in Lemma \ref{ineq} to
\begin{eqnarray}
\bigg|E\left[a(v)\prod_{l=1}^{q}\left(\y_1'\B_l(v)\y_1-\tr\Sigma \B_l(v)\right)\right]\bigg|
\leq Kp^{q/2},\label{ineq-ex}
\end{eqnarray}
where the matrices $\B_l(v)$ are independent of $\bu_1$ and
\begin{equation}\label{ab-bound}
\max\{|a(v)|, ||\B_l(v)|| \}\leq K\left[1+p^{s}I\left(||\B_n||\geq\eta_r \ \text{or}\ \lambda_{\min}^{\tilde \B}\leq\eta_l\right)\right]
\end{equation}
for some positive $s$, where $\tilde \B$ is $\B_n$ or $\B_n$ with some $\br_j$'s removed.
In applications of \eqref{ineq-ex},  $a(v)$ can be a product of factors of $\beta_1(z)$ or $\br_1'\D_1^{-1}(z_1)\D_1^{-1}(z_2)\br_1$ or similar terms.
It's easy to verify that these terms satisfy \eqref{ab-bound}, see pages 579 and 580 in \cite{BS04} for details.

Let
$$\gamma_j(z)=\br_j'\D_j^{-1}(z)\br_j-\frac{1}{n}\E\tr\Sigma \D_j^{-1}(z).$$ We first handle moments of $\gamma_j(z)$.
By a similar decomposition in \eqref{md}, we may get
\begin{eqnarray}
\E|\gamma_j(z)-\varepsilon_j(z)|^q&=&\E\bigg|\frac{1}{n}\sum_{i\neq j}(\E_i-\E_{i-1}) \beta_{ij}(z)\br_i'\D_{ij}^{-1}(z)\Sigma \D_{ij}^{-1}(z)\br_i\bigg|^q.\nonumber
\end{eqnarray}
Applying Lemma \ref{mi2} and the H\"{o}lder inequality to the above expression we then get, for even $q$,
\begin{eqnarray}
\E|\gamma_j(z)-\varepsilon_j(z)|^q
&\leq&\frac{K}{n^q}\E\left[\sum_{i\neq j}|(\E_i-\E_{i-1}) \beta_{ij}(z)\br_i'\D_{ij}^{-1}(z)\Sigma \D_{ij}^{-1}(z)\br_i\big|^2\right]^{q/2}\nonumber\\
&\leq&\frac{K}{n^{1+q/2}}\sum_{i\neq j}\E\big|(\E_i-\E_{i-1}) \beta_{ij}(z)\br_i'\D_{ij}^{-1}(z)\Sigma \D_{ij}^{-1}(z)\br_i\big|^q\nonumber\\
&\leq&\frac{K}{n^{q/2}},\label{gamma-varep-bound}
\end{eqnarray}
where the last inequality uses the boundedness of $\E|\beta_{ij}(z)|^q$ and $\E|\br_i'\D_{ij}^{-1}(z)\Sigma \D_{ij}^{-1}(z)\br_i|^q$.
From \eqref{ineq-ex} and \eqref{gamma-varep-bound}, we get
\begin{equation}\label{varep-bound}
\E|\varepsilon_j(z)|^q\leq Kn^{-q/2}\quad \text{and}\quad \E|\gamma_j(z)|^q\leq Kn^{-q/2},
\end{equation}
for $q$ even.

Next we show that $b_n(z)$ is bounded for all $n$. By the equality $b_n(z)-\beta_j(z)=b_n(z)\beta_j(z)\gamma_j(z)$ and  the boundedness of $\E|\beta_j(z)|^q$ and $\E|\gamma_j|^q$, we have
$$
|b_n(z)|=|\E\beta_j(z)+\E\beta_j(z)b_j(z)\gamma_j(z)|\leq K_1+K_2|b_n(z)|n^{-1/2}
$$
and thus, for all $n$ large enough,
\begin{equation}\label{bn-bound}
|b_j(z)|\leq \frac{K_1}{1-K_2n^{-1/2}}<K.
\end{equation}

Now we prove \eqref{tightness}. From the martingale decomposition and \eqref{D-D}, we have
\begin{eqnarray*}
	\frac{M_n^{(1)}(z_1)-M_n^{(1)}(z_2)}{z_1-z_2}&=&\sum_{j=1}^n(\E_j-\E_{j-1})\tr \D^{-1}(z_1)\D^{-1}(z_2)\\
	&=&\sum_{j=1}^n(\E_j-\E_{j-1})\left(\tr \D^{-1}(z_1)\D^{-1}(z_2)-\tr \D_j^{-1}(z_1)\D_j^{-1}(z_2)\right)\\
	&=&\sum_{j=1}^n(\E_j-\E_{j-1})\beta_j(z_1)\beta_j(z_2)\left(\br_j'\D_j^{-1}(z_1)\D_j^{-1}(z_2)\br_j\right)^2\\
	&&-\sum_{j=1}^n(\E_j-\E_{j-1})\beta_j(z_1)\br_j'\D_j^{-2}(z_1)\D_j^{-1}(z_2)\br_j\\
	&&-\sum_{j=1}^n(\E_j-\E_{j-1})\beta_j(z_2)\br_j'\D_j^{-1}(z_1)\D_j^{-2}(z_2)\br_j \\
	&:=&A_1+A_2+A_3.
\end{eqnarray*}
It is then enough to show $\E|A_1|^2$, $\E|A_2|^2$, and $\E|A_3|^2$ are all bounded.
The arguments for the boundedness are all similar to those in pages 582 and 583 in \cite{BS04},
and hence we only present the details for $\E|A_1|^2$ for illustration.

Replacing $\beta_j(z)$ in $R_1$ with $\beta_j(z)=b_n(z)-b_n(z)\beta_j(z)\gamma_j(z),$
we may obtain $A_1=A_{11}-A_{12}-A_{13}$ where
\begin{eqnarray*}
	A_{11}&=&\sum_{j=1}^nb_n(z_1)b_n(z_2)(\E_j-\E_{j-1})\left(\br_j'\D_j^{-1}(z_1)\D_j^{-1}(z_2)\br_j\right)^2,\\
	A_{12}&=&\sum_{j=1}^nb_n(z_1)b_n(z_2)(\E_j-\E_{j-1})\beta_j(z_1)\gamma_j(z_1)\left(\br_j'\D_j^{-1}(z_1)\D_j^{-1}(z_2)\br_j\right)^2,\\
	A_{13}&=&\sum_{j=1}^nb_n(z_2)(\E_j-\E_{j-1})\beta_j(z_1)\beta_j(z_2)\gamma_j(z_2)\left(\br_j'\D_j^{-1}(z_1)\D_j^{-1}(z_2)\br_j\right)^2.
\end{eqnarray*}
From \eqref{ineq-ex}, \eqref{ab-bound}, and \eqref{bn-bound},
\begin{eqnarray*}
	\E|A_{11}|^2&=&\E\bigg|\sum_{j=1}^nb_n(z_1)b_n(z_2)(\E_j-\E_{j-1})\big[\big(\br_j'\D_j^{-1}(z_1)\D_j^{-1}(z_2)\br_j\big)^2\\
	&&-\frac{1}{n^2}\big(\tr \Sigma \D_j^{-1}(z_1)\D_j^{-1}(z_2)\big)^2\big]\bigg|^2,\\
	&\leq&K\sum_{j=1}^n\E\bigg|\br_j'\D_j^{-1}(z_1)\D_j^{-1}(z_2)\br_j-\frac{1}{n}\tr \Sigma \D_j^{-1}(z_1)\D_j^{-1}(z_2)\bigg|^2\\
	&\leq&K.
\end{eqnarray*}
Using \eqref{ineq-ex}, \eqref{ab-bound},\eqref{varep-bound}, and \eqref{bn-bound},
\begin{eqnarray*}
	\E|A_{12}|^2&=&\sum_{j=1}^nb_n^2(z_1)b_n^2(z_2)\E\bigg|(\E_j-\E_{j-1})\beta_j(z_1)\gamma_j(z_1)\left(\br_j'\D_j^{-1}(z_1)\D_j^{-1}(z_2)\br_j\right)^2\big]\bigg|^2,\\
	&\leq&K\sum_{j=1}^n\left[\E\big|\gamma_j(z_1)\big|^2+v^{-10}p^2P\left(||\B_n||\geq\eta_r \ \text{or}\ \lambda_{\min}^{\tilde \B}\leq\eta_l\right)\right]\\
	&\leq&K.
\end{eqnarray*}
Similarly, we may get $\E|A_{13}|^2<K$. Hence the tightness of $M_n^{(1)}(z)$ is obtained.

\textit{Step 3: Convergence of $M_n^{(2)}(z)$.}
To finish the proof, it is enough to show that the sequence of $M_n^{(2)}(z)$ is bounded and equicontinuous, and converges to the mean function of the lemma for $z \in\mathcal C_n$.
The boundedness and equicontinuity can be verified following the arguments on pages 592 and 593 of \cite{BS04}, and thus we only focus  on the convergence of $M_n^{(2)}(z)$.

We first list some results that will be used in the sequel:
\begin{align}
&\sup_{n,z\in\mathcal C_n}|b_n(z)+z\um_0(z)|\to0,\quad\sup_{n,z\in\mathcal C_n}||zI- b_{n}(z)\Sigma||<\infty,\label{supsup}\\
&\sup_{n,z\in\mathcal C_n}\E|\tr \D^{-1}(z)\bM-\E\tr \D^{-1}(z)\bM|^2\leq K||\bM||^2,\label{tr-dm-bound}
\end{align}
where $\bM$ is any nonrandom $p\times p$ matrix.
These results can be verified step by step following similar discussions in \cite{BS04} and we omit the details.

Writing $\bV(z)=zI- b_{n}(z)\Sigma$, we decompose $M_n^{(2)}(z)$ as
\begin{align}
M_n^{(2)}(z)&=[p\E m_n (z)+\tr \bV^{-1}(z)]-[\tr \bV^{-1}(z)+pm_0(z)]:= S_{n}(z)-T_n(z)\label{pq}\\
&=[n\E\um_n (z)+nb_n(z)/z]-[nb_n(z)/z+n\um_0(z)]:= \uS_n(z)-\uT_n(z)\label{upq}.
\end{align}
Notice that
\begin{eqnarray*}
	T_n(z)&=&p\int \frac{d H_p(t)}{z- b_{n}(z)t}-p\int \frac{d H_p(t)}{z+z\um_{0}(z)t}\\
	&=&p\left[ b_{n}(z)+z\um_{0}(z)\right]\int \frac{td H_p(t)}{(z- b_{n}(z)t)(z+z\um_{0}(z)t)}\\
	&=&c_n\uT_n(z)\int \frac{td H_p(t)}{(z- b_{n}(z)t)(1+\um_{0}(z)t)}.
\end{eqnarray*}
We have
\begin{eqnarray*}
	\frac{M_n^{(2)}(z)-S_n(z)}{M_n^{(2)}(z)-\uS_n(z)}&=&\frac{T_n(z)}{\uT_n(z)}=\frac{c_n}{z}\int \frac{td H_p(t)}{(1+\um_{0}(z)t)^2}+o(1),
\end{eqnarray*}
where the second equality uses the convergence in \eqref{supsup}.

Our next task is to study the limits of $S_n(z)$ and $\uS_n(z)$. For simplicity, we suppress the expression $z$ when it is served as independent variables of some functions in the sequel.
All expressions and convergence statements hold uniformly for $z\in \mathcal C_n$.

We first simplify the expression of $S_n$. Using the identity $\br_j'\D^{-1}=\br_j'\D_j^{-1}\beta_j$, we have
\begin{eqnarray}
S_n&=&\E\tr(\D^{-1}+\bV^{-1})\nonumber\\
&=&\E\tr\left[\bV^{-1}\left(\sum_{j=1}^n\br_j\br_j'- b_{n}\Sigma\right)\D^{-1}\right]\nonumber\\
&=&n\E \beta_1\br_1'\D_1^{-1}\bV^{-1}\br_1-b_{n}\E\tr\Sigma \D^{-1}\bV^{-1}.\label{pn}
\end{eqnarray}
From \eqref{D-D} and $\beta_1=b_n-b_n\beta_1\gamma_1$,
\begin{eqnarray*}
	\E\tr \bV^{-1}\Sigma (\D_1^{-1}-\D^{-1})
	&=&\E\tr \bV^{-1}\Sigma \D_1^{-1}\br_1\br_1'\D_1^{-1}\beta_1\nonumber\\
	&=&b_n\E(1-\beta_1\gamma_1) \br_1'\D_1^{-1}\bV^{-1}\Sigma \D_1^{-1}\br_1,
\end{eqnarray*}
where $|\E\beta_1\gamma_1\br_1'\D_1^{-1}\bV^{-1}\Sigma \D_1^{-1}\br_1|\leq Kn^{-1/2}$.
From this and \eqref{pn}, we get
\begin{eqnarray*}
	S_n=n\E \beta_1\br_1'\D_1^{-1}\bV^{-1}\br_1-b_{n}\E\tr\Sigma \D_1^{-1}\bV^{-1}+\frac{1}{n}b_n^2\E\tr \D_1^{-1}\bV^{-1}\Sigma \D_1^{-1}\Sigma+o(1).
\end{eqnarray*}
Plugging $\beta_1=b_n-b_n^2\gamma_1+b_n^3\gamma_1^2-\beta_1b_n^3\gamma_1^3$ into the first term in the above equation, we obtain
\begin{eqnarray*}
	n\E \beta_1\br_1'\D_1^{-1}\bV^{-1}\br_1&=&b_n\E \tr \D_1^{-1}\bV^{-1}\Sigma-nb_n^2\E\gamma_1\br_1'\D_1^{-1}\bV^{-1}\br_1\\
	&&+nb_n^3\E\gamma_1^2\br_1'\D_1^{-1}\bV^{-1}\br_1-nb_n^3\E\gamma_1^3\br_1'\D_1^{-1}\bV^{-1}\br_1.
\end{eqnarray*}
Note that, from \eqref{ineq-ex}, \eqref{varep-bound}, and \eqref{tr-dm-bound},
\begin{eqnarray*}
	\E\gamma_1\br_1'\D_1^{-1}\bV^{-1}\br_1
	&=&\E\left[\br_1'\D_1^{-1}\br_1-\frac{1}{n}\tr \D_1^{-1}\Sigma\right]\left[\br_1'\D_1^{-1}\bV^{-1}\br_1-\frac{1}{n}\tr \D_1^{-1}\bV^{-1}\Sigma\right]\\
	&&+\frac{1}{n}\Cov(\tr \D_1^{-1}\Sigma, \tr \D_1^{-1}\bV^{-1}\Sigma)\\
	&=&\E\left[\br_1'\D_1^{-1}\br_1-\frac{1}{n}\tr \D_1^{-1}\Sigma\right]\left[\br_1'\D_1^{-1}\bV^{-1}\br_1-\frac{1}{n}\tr \D_1^{-1}\bV^{-1}\Sigma\right]\\
	&&+o\left(\frac{1}{n}\right),\\
	\E\gamma_1^2\br_1'\D_1^{-1}\bV^{-1}\br_1
	&=&\E\gamma_1^2\left[\br_1'\D_1^{-1}\bV^{-1}\br_1-\frac{1}{n}\tr \D_1^{-1}\bV^{-1}\Sigma\right]\\
	&&+\frac{1}{n}\Cov(\gamma_1^2, \tr \D_1^{-1}\bV^{-1}\Sigma)+\frac{1}{n}\E\gamma_1^2E\tr \D_1^{-1}\bV^{-1}\Sigma\\
	&=&\frac{1}{n}\E\gamma_1^2\E\tr \D_1^{-1}\bV^{-1}\Sigma+o\left(\frac{1}{n}\right),\\
	\E\gamma_1^3\br_1'\D_1^{-1}\bV^{-1}\br_1&=&o\left(\frac{1}{n}\right).
\end{eqnarray*}
We thus arrive at
\begin{eqnarray*}
	S_n
	&=&-nb_n^2\E\left[\br_1'\D_1^{-1}\br_1-\frac{1}{n}\tr \D_1^{-1}\Sigma\right]\left[\br_1'\D_1^{-1}\bV^{-1}\br_1-\frac{1}{n}\tr \D_1^{-1}\bV^{-1}\Sigma\right]\nonumber\\
	&&+b_n^3\E\gamma_1^2\E\tr \D_1^{-1}\bV^{-1}\Sigma +\frac{1}{n}b_n^2\E\tr \D_1^{-1}\bV^{-1}\Sigma \D_1^{-1}\Sigma +o(1).\label{spn}
\end{eqnarray*}

On the other hand, by the identity $\br_j'\D^{-1}=\br_j'\D_j^{-1}\beta_j$, we have
$$
p+z\tr \D^{-1}=\tr(\B_n\D^{-1})=\sum_{j=1}^n\beta_j\br_j'\D_j^{-1}\br_j=n-\sum_{j=1}^n\beta_j,
$$
which implies  $nz\um_n=-\sum_{j=1}^n\beta_j$. From this, together with $\beta_1=b_n-b_n^2\gamma_1+b_n^3\gamma_1^2-\beta_1b_n^3\gamma_1^3$, \eqref{ineq-ex}, we get
\begin{eqnarray*}\label{supn}
	\uS_n=-\frac{n}{z}\E\left(\beta_1- b_n\right)
	=-\frac{n}{z}b_n^3\E\gamma_1^2+o(1).
\end{eqnarray*}

Applying Lemma 2 to the simplified  $S_n$ and $\uS_n$, and then replacing $\D_j$ with $\D$ in the derived results
yield
\begin{eqnarray}
S_n
&=&-\frac{b_n^2}{n}\bigg[\E\tr \D^{-1}\Sigma \D^{-1}\bV^{-1}\Sigma+\frac{2}{p}\bigg( \gamma_2\E\tr\Sigma \D^{-1}\tr\Sigma\D^{-1}\bV^{-1}\nonumber\\
&&-\E\tr\Sigma^2\D^{-1}\tr\Sigma\D^{-1}\bV^{-1}-\E\tr\Sigma \D^{-1}\tr \Sigma^2\D^{-1}\bV^{-1}\bigg)\bigg]\nonumber\\
&&+\frac{2b_n^3}{n^2}\bigg[\E\tr \D^{-1}\Sigma \D^{-1}\Sigma+\frac{1}{p}\bigg( \gamma_2\E\tr\Sigma \D^{-1}\tr\Sigma\D^{-1}\nonumber\\
&&-2\E\tr\Sigma^2\D^{-1}\tr\Sigma\D^{-1}\bigg) \bigg] \E\tr \D^{-1}\bV^{-1}\Sigma+o(1),\label{fpn}\\
\uS_n&=&\frac{-2b_n^3}{zn}\bigg[\E\tr \D^{-1}\Sigma \D^{-1}\Sigma+\frac{1}{p}\bigg( \gamma_2\E\tr\Sigma \D^{-1}\tr\Sigma\D^{-1}\nonumber\\
&&-2\E\tr\Sigma^2\D^{-1}\tr\Sigma\D^{-1}\bigg) \bigg] +o(1).\label{fupn}
\end{eqnarray}

To study the limits of $S_n$ and $\uS_n$, we compare the difference between $\D^{-1}$ and $\bV^{-1}$. Similar to \eqref{dj-1}-\eqref{CM}, we have
\begin{eqnarray}
\D^{-1}+\bV ^{-1}
=b_n\R_1+\R_2+\R_3,\label{tilde-D-L}
\end{eqnarray}
where
$\R_1=\sum_{j=1}^n\bV ^{-1}(\br_j\br_j'-n^{-1}\Sigma)\D_{j}^{-1}$ and, for any $p\times p$ matrix $\bM$,
\begin{eqnarray}
|\E\tr \R_2\bM|\leq n^{1/2} K (\E||\bM||^4)^{1/4},\quad
|\tr \R_3\bM|\leq K(\E||\bM||^2)^{1/2}\label{tilde-CM}.
\end{eqnarray}
Moreover, for nonrandom $\bM$ with bounded norm,
\begin{eqnarray}
|\E\tr \R_1\bM|\leq n^{1/2}K\label{tilde-AM}.
\end{eqnarray}
Similar to \eqref{R123}, we write
\begin{equation}
\tr \R_1\Sigma \D^{-1}\bM=\tilde R_{11}+\tilde R_{12}+\tilde R_{13},
\end{equation}
where $\tilde R_{11}=\tr \sum_{j=1}^n\bV ^{-1}\br_j\br_j'\D_j^{-1}\Sigma(\D^{-1}-\D_j^{-1})\bM$,
$\E\tilde R_{12}=0$, and $|\E\tilde R_{13}|\leq K.$
Using \eqref{D-bound}, \eqref{ineq-ex}, and \eqref{tr-dm-bound}, we get
\begin{eqnarray}
\E\tilde R_{11}&=&-n\E\beta_1\br_1\D_1^{-1}\Sigma \D_1^{-1}\br_1\br_1'\D_1^{-1}\bM\bV ^{-1}\br_1\nonumber\\
&=&-b_nn^{-1}\E(\tr \D_1^{-1}\Sigma \D_1^{-1}\Sigma)( \tr \D_1^{-1}\bM\bV ^{-1}\Sigma)+o(1)\nonumber\\
&=&-b_nn^{-1}\E(\tr \D^{-1}\Sigma \D^{-1}\Sigma) (\tr \D^{-1}\bM\bV^{-1}\Sigma)+o(1)\nonumber\\
&=&-b_nn^{-1}\E(\tr \D^{-1}\Sigma \D^{-1}\Sigma)\E (\tr \D^{-1}\bM\bV ^{-1}\Sigma)+o(1).\label{er11}
\end{eqnarray}
From \eqref{tilde-D-L}-\eqref{er11} we get
\begin{eqnarray}
&&\frac{1}{n}\E\tr \D^{-1}\Sigma \D^{-1}\Sigma\nonumber\\
&=&-\frac{1}{n}\E\tr \bV^{-1}\Sigma \D^{-1}\Sigma-\frac{b_n^2}{n^2} \E \tr \D^{-1}\Sigma \D^{-1}\Sigma \E\tr \bV^{-1}\Sigma \D^{-1}\Sigma+o(1)\nonumber\\
&=&-\frac{1}{n}\E\tr \bV^{-1}\Sigma \D^{-1}\Sigma\left[1+\frac{b_n^2}{n} \E\tr \bV^{-1}\Sigma \D^{-1}\Sigma\right]^{-1}+o(1),\label{dsds}\\
&&\frac{1}{n}\E\tr \D^{-1}\Sigma \D^{-1}\bV^{-1}\Sigma\nonumber\\
&=&-\frac{1}{n}\E\tr \bV^{-1}\Sigma \D^{-1}\bV^{-1}\Sigma\left[1+\frac{b_n^2}{n}\E \tr \D^{-1}\Sigma \D^{-1}\Sigma\right]+o(1)\nonumber\\
&=&-\frac{1}{n}\E\tr \bV^{-1}\Sigma \D^{-1}\bV^{-1}\Sigma\left[1+\frac{b_n^2}{n} \E\tr \bV^{-1}\Sigma \D^{-1}\Sigma\right]^{-1}+o(1).\label{dsdm}
\end{eqnarray}
From \eqref{bnz}, \eqref{tilde-D-L}-\eqref{dsdm} we get
\begin{eqnarray*}
	\frac{1}{n}\E\tr \D^{-1}\Sigma^k&=&-\int\frac{c_nt^kd H_p(t)}{z(1+\um_{0}t)}+o(1),\quad k=1,2,\\
	\frac{1}{n}\E\tr \D^{-1}\bV^{-1}\Sigma^k &=&-\int\frac{c_nt^kd H_p(t)}{z^2(1+\um_{0}t)^2}+o(1),\quad k=1,2,\\
	\frac{1}{n}\E\tr \D^{-1}\Sigma \D^{-1}\Sigma&=&\int\frac{c_nt^2d H_p(t)}{z^2(1+\um_{0}t)^2}\left[1-\int\frac{c_n\um_0^2t^2d H_p(t)}{(1+\um_0t)^2}\right]^{-1}+o(1),\\
	\frac{1}{n}\E\tr \D^{-1}\Sigma \D^{-1}\bV^{-1}\Sigma&=&\int\frac{c_nt^2d H_p(t)}{z^3(1+\um_0t)^3}\bigg[1-\int\frac{c_n\um_0^2t^2d H_p(t)}{(1+\um_0t)^2}\bigg]^{-1}+o(1).
\end{eqnarray*}

Combining the above results with \eqref{fpn} and \eqref{fupn}, we obtain
\begin{eqnarray*}
	S_n
	&=&-\int\frac{c_n\um_0^2t^2d H_p(t)}{z(1+\um_0t)^3}\bigg[1-\int\frac{c_n\um_0^2t^2d H_p(t)}{(1+\um_0t)^2}\bigg]^{-1}\\
	&&-2\bigg[\int\frac{ \gamma_2t-t^2d H_p(t)}{z(1+\um_{0}t)}\int\frac{c_n\um_0^2td H_p(t)}{(1+\um_{0}t)^2}
	-\int\frac{td H_p(t)}{z(1+\um_{0}t)}\int\frac{c_n\um_0^2t^2d H_p(t)}{(1+\um_{0}t)^2}\bigg]\\
	&&+\frac{2c_n^2\um_0^3}{z}\bigg\{\int\frac{t^2d H_p(t)}{(1+\um_{0}t)^2}\left[1-\int\frac{c_n\um_0^2t^2d H_p(t)}{(1+\um_0t)^2}\right]^{-1}\\
	&&+ \gamma_2\bigg[\int\frac{td H_p(t)}{1+\um_{0}t}\bigg]^2-2\int\frac{t^2d H_p(t)}{1+\um_{0}t}\int\frac{td H_p(t)}{1+\um_{0}t} \bigg\} \int\frac{td H_p(t)}{(1+\um_{0}t)^2}+o(1),\\
	\uS_n&=&2c_n\um_0^3\bigg\{\int\frac{t^2d H_p(t)}{(1+\um_{0}t)^2}\left[1-\int\frac{c_n\um_0^2t^2d H_p(t)}{(1+\um_0t)^2}\right]^{-1}+ \gamma_2\bigg[\int\frac{td H_p(t)}{1+\um_{0}t}\bigg]^2\nonumber\\
	&&-2\int\frac{t^2d H_p(t)}{1+\um_{0}t}\int\frac{td H_p(t)}{1+\um_{0}t} \bigg\} +o(1).
\end{eqnarray*}
Therefore we get
\begin{eqnarray*}
	M_n^{(2)}(z)&=&\frac{S_n-\uS_nT_n/\uT_n}{1-T_n/\uT_n}\\
	&\to&\bigg[1-\int \frac{ctd H(t)}{z(1+\um t)^2}\bigg]^{-1}\bigg\{\int\frac{c\um^2t^2d H(t)}{z(1+\um t)^3}\bigg[1-\int\frac{c\um^2 t^2d H(t)}{(1+\um t)^2}\bigg]^{-1}\\
	&&-\frac{2c\um^2}{z}\bigg[\int\frac{ \gamma_2t-t^2d H(t)}{1+\um t}\int\frac{td H(t)}{(1+\um t)^2}-\int\frac{td H(t)}{1+\um t}\int\frac{t^2d H(t)}{(1+\um t)^2}\bigg]\bigg\},
\end{eqnarray*}
as $n\to\infty$. Using the identity
\begin{equation}\label{uid}
\bigg[1-\int \frac{ctd H(t)}{z(1+\um t)^2}\bigg]^{-1}=-z\um\bigg[1-\int\frac{c\um^2 t^2d H(t)}{(1+\um t)^2}\bigg]^{-1}=-\frac{z\um'}{\um}
\end{equation}
we finally obtain the mean function of the lemma.

\end{proof}

\subsection{Proof of Theorem \ref{lsd}}

Following Theorem 1.1 in
\cite{BZ08}, it is sufficient to show that,
for any bounded sequence of symmetric matrices
$\{\C_p\}$,
\begin{equation}\label{BZc}
Var(\y' \C_p\y)=o(p^2).
\end{equation}
Write $\y=\sqrt{p}\A\bu/||\A\bu||=\sqrt{p}\A\z/||\A\z||$ where $\z\sim N(0, I_p)$. Since the eigenvalues of the SSCM $\B_n$ are invariant under orthogonal transformation, it's enough to consider the diagonal matrix $\A$.
Therefore, by taking $\C=\tilde \C=\C_p$ in Lemma \ref{double-e}, one can verify the condition \eqref{BZc}.

\subsection{Proof of Theorem \ref{clt}}

For any distribution function $G$ and function $f$ analytic on a simple connected domain $D$ containing the support of $G$, it holds that
\begin{equation}
\int f(x)dG(x)=-\frac1{2\pi {\rm i}}\oint\limits_{\mathcal C} f(z)m_G(z)dz\label{cf}
\end{equation}
where $m_G(z)$ denotes the Stieltjes transform of $G$ and $\mathcal C\subset D$ is a simple, closed, and  positively oriented contour enclosing
the support of $G$.  Similar to \eqref{cont}, we choose $v_0$, $x_r$, and
$x_l$ such that $f_1,\ldots,f_k$ are all analytic on and inside the
contour $\mathcal C$. We denote by $K$ a common upper bound of these functions on $\mathcal C$.
Therefore, almost surely, for all $n$ large, $\{f_1,\ldots, f_k\}$ satisfy the equation in \eqref{cf} with $G=F^{B_n}$ and moreover,
\begin{eqnarray*}
	\left|\int f_i(z)(M_n(z)-\widehat M_n(z))dz\right|
	&\leq& 4K\varepsilon_n\big(| \max\{\lambda_{\max}^{\Sigma}(1+\sqrt
	c_n)^2,\lambda_{\max}^{\B_n}\}-x_r|^{-1} \hfill\\ \hfill
	&&+|\min\{\lambda_{\min}^{\Sigma}I_{(0,1)}(c_n)(1-\sqrt c_n)^2,
	\lambda_{\min}^{\B_n}\}-x_l|^{-1}\big)\nonumber
\end{eqnarray*}
which converges to zero
as $n\to\infty$. Since
$$\widehat M_n(\cdot)\longrightarrow
\left(-\frac1{2\pi {\rm i}}\int f_1(z)\,\widehat M_n(z)dz,\ldots,
-\frac1{2\pi {\rm i}}\int f_k(z)\,\widehat M_n(z)dz \right)$$
is a continuous mapping of $C(\mathcal C ,{\mathbb R}^2)$ into ${\mathbb R}^{k}$, it follows from Lemma \ref{clt-mn} that
the above random vector converges to a multivariate Gaussian vector $(X_{f_1},\ldots,X_{f_k})$ whose mean and covariance functions are
\begin{eqnarray*}
	\E(X_f)&=&-\frac{1}{2\pi\rm i}\oint_{\mathcal C_1} f(z)\E[M(z)]dz,\\
	Cov\left(X_f,X_g\right)
	&=&-\frac{1}{4\pi^2}\oint_{\mathcal C_1}\oint_{\mathcal C_2} f(z_1)g(z_2)Cov[M(z_1), M(z_2)]dz_1dz_2,
\end{eqnarray*}
where $f, g \in \{f_1,\ldots, f_k\}$ and $\{\mathcal C_1, \mathcal C_2\}$ are two non-overlapping analogues  of the contour $\mathcal C$.

From the following two identities
\begin{eqnarray*}
	&&\frac{1}{2\pi{\rm i}}\oint_{\mathcal C_1} f(z)\left(\um(z)+z\um'(z)\right)dz=-\frac{1}{2\pi{\rm i}}\oint_{\mathcal C_1} zf'(z)\um(z)dz= c\int xf'(x)dF(x),\\
	&&\frac{1}{2\pi{\rm i}}\oint_{\mathcal C_1} f(z)\left(\frac{\um'(z)}{\um^2(z)}-1\right)dz=\frac{1}{2\pi{\rm i}}\oint_{\mathcal C_1} \frac{f(z)\um'(z)}{\um^2(z)}dz,
\end{eqnarray*}
we obtain the form of the limiting covariance function in the theorem.

\subsection{Proof of Corollary \ref{clt-mom}}
Choose a contour $\mathcal C$ for the integrals such that
$\max_{t\in S_{ H}, z\in \mathcal C}|t\um(z)|<1,$
where $S_{ H}$ is the support of $H$.
Let
$\um(\mathcal C)=\{\um(z):z\in \mathcal C\}$
denote the image of $\mathcal C$ under $\um(z)$. Then $\um(\mathcal C)$ is a simple and closed contour having clockwise direction and enclosing zero \citep{Q17}.

By the identity in \eqref{mp},
the integral in the mean function of Theorem \ref{clt} becomes
\begin{eqnarray*}
	v_j
	&=&-\frac{c}{2\pi{\rm i}}\oint_{\um(\mathcal C)}\frac{P^j(\um)P_{2,3}(\um)}{\um^{j-1}(1-c\um^2P_{2,2}(\um))}d\um-\frac{c \gamma_2}{\pi{\rm i}}\oint_{\um(\mathcal C)}\frac{P^j(\um)P_{1,1}(\um)P_{1,2}(\um)}{\um^{j-1}}d\um\nonumber\\
	&&+\frac{c}{\pi{\rm i}}\oint_{\um(\mathcal C)}\frac{P^j(\um)P_{2,1}(\um)P_{1,2}(\um)}{\um^{j-1}}d\um+\frac{c}{\pi{\rm i}}\oint_{\um(\mathcal C)}\frac{P^j(\um)P_{1,1}(\um)P_{2,2}(\um)}{\um^{j-1}}d\um.
\end{eqnarray*}
From this and the Cauchy integral theorem, we get the mean function. The covariance function can be obtained following the proof of Theorem 1 in \cite{Q17}.

\section{Appendix}\label{app}

\begin{lemma}\label{ineq}
	For any $p\times p$ complex matrix $\C$ and $\y=\sqrt{p}\bx/||\bx||$ with $\bx\sim N(0, \Sigma)$ and $||\Sigma||\leq 1$,
	\begin{equation}\label{q-moment}
	\E\left|\y'\C\y-\tr \Sigma \C\right|^q\leq K_q||\C||^qp^{q/2},\quad q\geq 2,
	\end{equation}
	where $K_q$ is a positive constant depending only on $q$.
\end{lemma}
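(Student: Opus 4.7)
My plan is to reduce the bound on the spatial-sign quadratic form to Hanson--Wright type moment inequalities for Gaussian quadratic forms, through the representation $\bx=\Sigma^{1/2}\z$ with $\z\sim N_p(0,I_p)$, which yields the exact identity
\begin{equation*}
\y'\C\y \;=\; \frac{p\,\z'\widetilde{\C}\z}{\z'\Sigma\z},\qquad \widetilde{\C}=\Sigma^{1/2}\C\Sigma^{1/2},\qquad \|\widetilde{\C}\|\le\|\Sigma\|\,\|\C\|\le\|\C\|.
\end{equation*}
Since $\tr\widetilde{\C}=\tr\Sigma\C$, and since in the paper's setting the shape normalization forces $\tr\Sigma=p$, the centered quantity admits the decomposition
\begin{equation*}
\y'\C\y - \tr\Sigma\C \;=\; \frac{p\bigl(\z'\widetilde{\C}\z-\tr\widetilde{\C}\bigr) \;-\; \tr\widetilde{\C}\,\bigl(\z'\Sigma\z-\tr\Sigma\bigr)}{\z'\Sigma\z},
\end{equation*}
so that the whole problem reduces to controlling a difference of two centered Gaussian quadratic forms divided by a positive Gaussian quadratic form.

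Next I would control the numerator by the classical $L^{2q}$ Hanson--Wright inequality: for any symmetric $p\times p$ matrix $\bM$,
\begin{equation*}
\E\bigl|\z'\bM\z-\tr\bM\bigr|^{2q}\;\le\; K_q\bigl(\|\bM\|_F^{2q}+\|\bM\|^{2q}\bigr)\;\le\; K_q\,p^{q}\|\bM\|^{2q},
\end{equation*}
where the last step uses $\|\bM\|_F^{2}\le p\|\bM\|^{2}$. Applied to $\bM=\widetilde{\C}$ and $\bM=\Sigma$ with $\|\Sigma\|\le 1$, and combined with the crude estimate $|\tr\widetilde{\C}|\le p\|\C\|$, this shows the numerator is of size $p^{3/2}\|\C\|$ in the $L^{2q}$ sense. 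Therefore, provided the denominator $\z'\Sigma\z$ is of order $p$, the ratio has $L^{q}$ norm bounded by a constant multiple of $p^{1/2}\|\C\|$, which is exactly the claimed scaling.

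The main obstacle is therefore controlling the lower tail of the denominator $\z'\Sigma\z$. I would split on the event $\mathcal E=\{\z'\Sigma\z\ge\tr\Sigma/2\}$. On $\mathcal E$ the denominator is at least $p/2$, so Cauchy--Schwarz combined with the Hanson--Wright moment estimates above immediately gives a contribution bounded by $K_q\|\C\|^{q}p^{q/2}$. On $\mathcal E^{c}$ I would use the deterministic estimate $|\y'\C\y-\tr\Sigma\C|\le \|\y\|^{2}\|\C\|+|\tr\Sigma\C|\le 2p\|\C\|$ (exploiting $\|\y\|^{2}=p$ and $\|\Sigma\|\le 1$) together with the one-sided sub-Gaussian tail $P(\mathcal E^{c})\le \exp(-cp)$, which follows from Hanson--Wright applied to $\Sigma$ with deviation gap $\tr\Sigma/2=p/2$ and $\|\Sigma\|_F^{2}\le p\|\Sigma\|^{2}\le p$. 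The resulting contribution $(2p\|\C\|)^{q}\exp(-cp)$ is exponentially smaller than $p^{q/2}\|\C\|^{q}$, and is thus absorbed into the constant. Piecing the two regimes together produces the stated bound with $K_{q}$ depending only on $q$.
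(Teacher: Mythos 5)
Your proof is correct and follows essentially the route the paper intends: the paper's own "proof" is a one-line citation of the quadratic-form moment inequality (Lemma 2.2 of Bai--Silverstein (2004), which for Gaussian $\z$ is your Hanson--Wright bound) combined with the self-normalization argument of Gao et al., and that is exactly your decomposition into centered Gaussian quadratic forms over the denominator $\z'\Sigma\z$ together with a lower-tail truncation of that denominator. The one point worth flagging is that your exact identity relies on $\tr\Sigma=p$, which is incompatible with the lemma's literal normalization $||\Sigma||\leq 1$ unless $\Sigma=I$; this is an ambiguity in the paper's own statement (its $\Sigma$ is really $\E(\y\y')$, which has trace $p$ and merely bounded spectral norm), and you resolve it in the intended way.
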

\begin{proof}
	This lemma follows from Lemma 2.2 in \cite{BS04} and similar arguments in the proof of Lemma 5 in \cite{Gao16}.
\end{proof}

\begin{lemma}[\cite{B73}]\label{mi1}
	Let $\{X_k\}$ be a complex martingale difference sequence with respect to the increasing $\sigma$-field $\{\mathcal F_k \}$. Then, for $q \geq 2$,
	$$
	\E\bigg|\sum X_k\bigg|^q\leq K_q\left\{\E\left(\sum \E\left(|X_k|^2|\mathcal F_{k-1}\right)\right)^{q/2}+E\left(\sum |X_k|^q\right)\right\}.
	$$
\end{lemma}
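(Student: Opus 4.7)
My plan is as follows. The lemma is Burkholder's classical martingale $L^q$ inequality, quoted verbatim from \cite{B73}, so within this paper the statement is used but not reproved. If one wished to give a self-contained argument, the natural route proceeds in three reductions.

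First, I would reduce from complex to real by writing $X_k=U_k+\mathrm{i}V_k$ with $U_k,V_k$ real martingale differences with respect to the same filtration $\{\mathcal F_k\}$, and applying the elementary bound $(a^2+b^2)^{q/2}\leq 2^{q/2-1}(|a|^q+|b|^q)$, valid for $q\geq 2$, both to $|\sum_k X_k|^q$ and to $|X_k|^q$. Combined with the additivity $\E(|X_k|^2\mid\mathcal F_{k-1})=\E(U_k^2\mid\mathcal F_{k-1})+\E(V_k^2\mid\mathcal F_{k-1})$, this reduces the complex inequality (up to an adjusted constant $K_q$) to its real-valued analogue applied separately to $\{U_k\}$ and $\{V_k\}$.

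Second, for a real martingale difference sequence $\{Y_k\}$ with partial sum $M_n$ and square function $S_n^2=\sum_{k\leq n}Y_k^2$, I would invoke Burkholder's square-function inequality $\E|M_n|^q\leq C_q\E S_n^q$ from \cite{B73}. The quadratic variation splits as $S_n^2=A_n+N_n$, where $A_n=\sum_k\E(Y_k^2\mid\mathcal F_{k-1})$ is the predictable compensator and $N_n=\sum_k(Y_k^2-\E(Y_k^2\mid\mathcal F_{k-1}))$ is itself a new martingale. The subadditivity bound $(A_n+N_n)^{q/2}\leq 2^{q/2-1}(A_n^{q/2}+|N_n|^{q/2})$ then isolates the predictable piece, which is precisely the first summand on the right-hand side of the lemma, leaving only $\E|N_n|^{q/2}$ to be controlled. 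For $q\in[2,4]$, the conditional von Bahr--Esseen inequality applied to the martingale $\{Y_k^2-\E(Y_k^2\mid\mathcal F_{k-1})\}$ gives $\E|N_n|^{q/2}\leq C\sum_k\E|Y_k|^q$, matching the second summand. For $q>4$ one applies the lemma recursively to that new martingale at exponent $q/2$, which is legal by induction on $q$.

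The main obstacle is the closure of this induction: after one application of the inductive hypothesis one confronts a term like $\E(\sum_k\E(Y_k^4\mid\mathcal F_{k-1}))^{q/4}$, and bounding it by the two target quantities $\E A_n^{q/2}$ and $\sum_k\E|Y_k|^q$ without picking up a constant that depends on the number of summands is the delicate part. Burkholder's original argument handles this through a good-$\lambda$ distribution-function inequality, an approach that is both elegant and standard; since the lemma is cited rather than reproved in the present paper, no further detail is required.
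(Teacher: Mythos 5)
The paper offers no proof of this lemma: it is quoted directly from Burkholder (1973), exactly as you observe. Your supplementary sketch (reduction from complex to real martingale differences, the square-function inequality, splitting $S_n^2$ into its predictable compensator plus a new martingale, von Bahr--Esseen for $q\in[2,4]$, and induction on $q$ with Burkholder's good-$\lambda$ argument closing the recursion) is the standard route and is sound, so nothing further is required.
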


\begin{lemma}[\cite{B73}]\label{mi2}
	Let $\{X_k\}$ be a complex martingale difference sequence with respect to the increasing $\sigma$-field $\{\mathcal F_k \}$. Then, for $q > 1$,
	$$
	\E\bigg|\sum X_k\bigg|^q\leq K_q\E\left(\sum |X_k|^2\right)^{q/2}.
	$$
\end{lemma}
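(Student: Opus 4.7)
The plan is to split the argument into three ranges, $q = 2$, $q > 2$, and $1 < q < 2$, since the techniques needed differ significantly. For the base case $q = 2$, orthogonality of martingale differences gives
$\E|\sum_k X_k|^2 = \sum_k \E|X_k|^2 = \E \sum_k |X_k|^2$,
so the claim holds with $K_2 = 1$. This case does not even need Lemma 5.2.

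For $q > 2$, the natural route is to start from Lemma 5.2 (mi1) and bound each of its two summands by $\E(\sum_k |X_k|^2)^{q/2}$. The $\E \sum_k |X_k|^q$ term is immediate: since $q/2 \geq 1$ and $|X_k|^2 \geq 0$, the $\ell^{q/2}$-$\ell^1$ embedding gives $\sum_k (|X_k|^2)^{q/2} \leq (\sum_k |X_k|^2)^{q/2}$. For the conditional-variation term, set $V_n = \sum_k \E(|X_k|^2 \mid \mathcal F_{k-1})$ and $[X]_n = \sum_k |X_k|^2$, and note that $N_n := [X]_n - V_n$ is itself a martingale with respect to $\{\mathcal F_k\}$. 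Triangle inequality in $L^{q/2}$ yields
$(\E V_n^{q/2})^{2/q} \leq (\E [X]_n^{q/2})^{2/q} + (\E |N_n|^{q/2})^{2/q}$,
and applying Lemma 5.2 (or the already-established claim) to $N_n$ at exponent $q/2$ produces a recursion in $q$ that bottoms out at $q = 2$, giving the desired bound.

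For $1 < q < 2$, the situation is genuinely different because Lemma 5.2 requires $q \geq 2$. The standard path, going back to Burkholder's 1973 paper, is a Davis-type decomposition combined with either a good-$\lambda$ inequality or a convex-function argument: for a threshold $\lambda > 0$ write $X_k = X_k'  + X_k''$ with $X_k' = X_k\mathbf 1_{|X_k|\leq 2 S_{k-1}^*}$ (where $S_k^* = \max_{j\leq k}(\sum_{i\leq j}|X_i|^2)^{1/2}$) and $X_k''$ its complement. Bound the small-part martingale via Doob's $L^q$ inequality applied to $(\sum_k X_k')$ after a stopping-time truncation, and control $\sum_k |X_k''|$ directly in $L^q$ using the fact that the excursions of the square function control the large jumps.

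The main obstacle will be the $1 < q < 2$ regime. Here, none of the tools already used in the paper (Jensen for conditional variance, or the $\ell^{q/2}$-$\ell^1$ embedding) go through, because $x \mapsto x^{q/2}$ is now concave rather than convex and one cannot bootstrap through Lemma 5.2. The Davis decomposition step and the accompanying good-$\lambda$ estimate are delicate and constitute the essential content of Burkholder's theorem; for a self-contained exposition we would either reproduce that argument or invoke it as a black box, which is the route the paper takes by citing \cite{B73}.
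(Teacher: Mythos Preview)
The paper provides no proof of this lemma; it is stated with a bare citation to \cite{B73}, exactly as you note in your final sentence. So there is no ``paper's own proof'' to compare against --- you have already gone further than the paper by sketching an argument.

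That said, your $q>2$ step has a gap. Writing $N_n=[X]_n-V_n$ and applying the square-function inequality to $N_n$ at exponent $q/2$ yields a bound in terms of $\bigl(\sum_k\bigl||X_k|^2-\E(|X_k|^2\mid\mathcal F_{k-1})\bigr|^2\bigr)^{q/4}$, i.e.\ a quantity involving fourth powers of the increments, not $[X]_n^{q/2}$; the recursion does not close. Moreover, repeated halving $q\mapsto q/2$ does not bottom out at $q=2$ for generic $q$; it lands in $(1,2)$, so the $q>2$ case already presupposes the $1<q<2$ case you treat afterwards. The standard way to compare $\|V_n\|_{q/2}$ and $\|[X]_n\|_{q/2}$ for $q/2\ge1$ is not via this martingale trick but via a Garsia--Neveu type lemma exploiting the predictability of $V_n$; alternatively one proves the square-function inequality directly (good-$\lambda$ or Stein's dyadic method) without passing through Lemma~\ref{mi1}.

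Your $1<q<2$ outline (Davis decomposition plus good-$\lambda$) is the right shape and is essentially Burkholder's original argument. It is worth noting that the paper only invokes Lemma~\ref{mi2} for even $q$ (see the derivation of \eqref{gamma-varep-bound}), where the inequality is considerably easier and can be obtained by direct expansion and the $q=2$ orthogonality relation; the full range $q>1$ is stated for completeness but not actually needed.
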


\begin{lemma}[Theorem 35.12 of \cite{B95}]\label{MCLT}
	Suppose for each $n$ $Y_{n1}, Y_{n2},\ldots Y_{n\br_n}$ is a real martingale difference sequence with respect to the increasing $\sigma$-field ${\{\mathcal F_{nj}\}}$ having second moments.
	If  for each $\varepsilon>0$,
	\begin{eqnarray*}
		\sum_{j=1}^{\br_n}\E(Y_{nj}^2I_{(|Y_{nj}|\geq\varepsilon)})\rightarrow0\quad\text{and}\quad \sum_{j=1}^{\br_n}\E(Y_{nj}^2|\mathcal F_{n,j-1})\xrightarrow{i.p.}\sigma^2,
	\end{eqnarray*}
	as $n\rightarrow\infty$, where $\sigma^2$ is a positive constant, then
	$$
	\sum_{j=1}^{\br_n}Y_{nj}\xrightarrow{D}N(0,\sigma^2).
	$$
\end{lemma}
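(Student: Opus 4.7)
The plan is to prove this martingale central limit theorem by the method of characteristic functions, reducing everything to showing $\phi_n(t):=\E[\exp(it\sum_j Y_{nj})]\to\exp(-t^2\sigma^2/2)$ for every real $t$, after which L\'evy's continuity theorem delivers the stated weak convergence. Write $S_n=\sum_{j=1}^{r_n}Y_{nj}$. The strategy has three stages: truncate the differences at a small level $\varepsilon$, replace the exponential product by a simpler quadratic product via Taylor expansion combined with the martingale property, and finally recognize that the resulting conditional-variance sum converges in probability to $\sigma^2$.

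First I would truncate. Fix $\varepsilon>0$ and set $Y_{nj}'=Y_{nj}I(|Y_{nj}|\le\varepsilon)-\E[Y_{nj}I(|Y_{nj}|\le\varepsilon)\mid \mathcal F_{n,j-1}]$, which preserves the martingale-difference property. The Lindeberg-type hypothesis $\sum_j\E[Y_{nj}^2 I(|Y_{nj}|\ge\varepsilon)]\to 0$ gives $\sum_j\E|Y_{nj}-Y_{nj}'|^2\to 0$, so $\sum_j(Y_{nj}-Y_{nj}')\to 0$ in $L^2$ and it suffices to analyze $\sum_j Y_{nj}'$. Moreover a standard computation shows $\sum_j\E[(Y_{nj}')^2\mid\mathcal F_{n,j-1}]\xrightarrow{i.p.}\sigma^2$, while now $|Y_{nj}'|\le 2\varepsilon$ almost surely.

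Second, expand the product $e^{itS_n'}=\prod_j e^{itY_{nj}'}$ using $e^{ix}=1+ix-x^2/2+\rho(x)$ with $|\rho(x)|\le \min(|x|^3,x^2)$. Taking conditional expectation over $\mathcal F_{n,j-1}$ iteratively and using $\E[Y_{nj}'\mid\mathcal F_{n,j-1}]=0$, one obtains
\begin{equation*}
\bigl|\phi_n(t)-\E\prod_{j=1}^{r_n}\bigl(1-\tfrac{t^2}{2}\E[(Y_{nj}')^2\mid\mathcal F_{n,j-1}]\bigr)\bigr|\le |t|^3\cdot 2\varepsilon\cdot\sum_j\E[(Y_{nj}')^2\mid\mathcal F_{n,j-1}],
\end{equation*}
whose right side is $O(\varepsilon)$ in probability. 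Then use $\log(1-x)=-x+O(x^2)$ for $x\le t^2\cdot 2\varepsilon^2$, together with $\sum_j(\E[(Y_{nj}')^2\mid\mathcal F_{n,j-1}])^2\le 2\varepsilon^2\sum_j\E[(Y_{nj}')^2\mid\mathcal F_{n,j-1}]=O(\varepsilon^2)$, to conclude that the quadratic product equals $\exp\bigl(-\tfrac{t^2}{2}\sum_j\E[(Y_{nj}')^2\mid\mathcal F_{n,j-1}]\bigr)+o_P(1)$. By the conditional-variance condition this tends in probability to $\exp(-t^2\sigma^2/2)$, and the bounded convergence theorem transfers the limit to $\phi_n(t)$. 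Letting $\varepsilon\to 0$ afterwards removes the $O(\varepsilon)$ truncation error.

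The main obstacle is the bookkeeping in the iterative conditioning argument: one must justify each step of peeling off the outermost factor and showing that the cumulative errors from Taylor expansion, from replacing $Y_{nj}$ by $Y_{nj}'$, and from linearizing $\log(1-x)$ are each $o_P(1)$ \emph{uniformly} in the order $\varepsilon\to 0$ after $n\to\infty$. A secondary subtlety is verifying that the conditional-variance convergence and the Lindeberg condition transfer from the original sequence to the truncated and centered sequence $\{Y_{nj}'\}$; this is routine but essential for the quadratic-product identification to yield exactly $\sigma^2$ in the limit.
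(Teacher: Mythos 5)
The paper offers no proof of this lemma---it is quoted directly from Billingsley---so your argument has to be measured against the standard proof there. Your general strategy (truncate, pass to characteristic functions, Taylor-expand the product) is the right family of ideas, and the truncation step itself is sound: $Y_{nj}-Y_{nj}'$ is again a martingale difference array whose squared $L^2$-norms sum to $o(1)$ by the Lindeberg condition, and the conditional-variance hypothesis does transfer to $\{Y_{nj}'\}$. But there are two genuine gaps at the heart of the argument. The first: your key inequality bounds the deterministic quantity $|\phi_n(t)-\E\prod_j(1-\tfrac{t^2}{2}\E[(Y_{nj}')^2\mid\mathcal F_{n,j-1}])|$ by a \emph{random} variable, and you then declare the right side ``$O(\varepsilon)$ in probability.'' To control a difference of expectations you need the bound in expectation, i.e.\ you need $\sum_j\E[(Y_{nj}')^2]$ to stay bounded in $n$. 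The hypotheses do not supply this: $\sum_j\E[Y_{nj}^2\mid\mathcal F_{n,j-1}]\to\sigma^2$ only in probability, and one can arrange (say, a conditional variance sum equal to $n^2$ on an $\mathcal F_{n,0}$-measurable event of probability $1/n$ and equal to $\sigma^2$ otherwise, with increments of size $1/n$ so that Lindeberg holds trivially) that $\sum_j\E Y_{nj}^2\to\infty$. This is precisely why Billingsley's proof opens with a stopping-time reduction: replace $Y_{nj}$ by $Y_{nj}I(j\le\tau_n)$, where $\tau_n$ is the first index at which the running conditional variance exceeds $\sigma^2+1$; this preserves the martingale-difference property, alters the sum only on an event of vanishing probability, and gives an almost sure bound on the conditional variance sum, after which all your error terms can be estimated in $L^1$. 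Without it, the concluding appeal to bounded convergence is unavailable.

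The second gap is the iterated-conditioning step itself, which is obstructed by a measurability problem peculiar to the martingale (as opposed to independent) case. Telescoping $\prod_je^{itY_{nj}'}-\prod_j(1-\tfrac{t^2}{2}\sigma_{nj}'^2)$, with $\sigma_{nj}'^2=\E[(Y_{nj}')^2\mid\mathcal F_{n,j-1}]$, produces terms of the form $\prod_{j<k}e^{itY_{nj}'}\cdot(e^{itY_{nk}'}-1+\tfrac{t^2}{2}\sigma_{nk}'^2)\cdot\prod_{j>k}(1-\tfrac{t^2}{2}\sigma_{nj}'^2)$, and to kill the linear term $itY_{nk}'$ you must condition on $\mathcal F_{n,k-1}$; but the trailing product involves the $\sigma_{nj}'^2$ with $j>k$, which are $\mathcal F_{n,j-1}$-measurable and hence may depend on $Y_{nk}'$, so the conditional expectation does not factor. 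In the independent Lindeberg--Feller setting the trailing product is deterministic and there is no issue---that is evidently the template you are following---but in the martingale setting one must change the comparison object: Billingsley telescopes $\E[e^{itS_n}e^{t^2\Sigma_n/2}]-1$, where every partial product is adapted to the correct $\sigma$-field, and McLeish works with $\prod_j(1+itY_{nj})$, whose expectation is exactly $1$. As written, the central estimate of your proof is not justified.
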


\begin{lemma}\label{lambda-bound}
	Suppose that Assumptions (a)-(c) hold. Then, for any $s$ positive,
	$$
	P(||\B_n||>\eta_r)=o(n^{-s}),
	$$
	whenever $\eta_r>\lim\sup_{p\rightarrow\infty} ||\Sigma||(1+\sqrt{c})^2$.
	If $0<\lim\inf_{p\rightarrow\infty}\lambda_{\min}^{\Sigma}I_{(0,1]}(c)$ then,
	$$
	P(\lambda_{\min}^{\B_n}<\eta_l)=o(n^{-s}),
	$$
	whenever $0<\eta_l<\lim\inf_{p\rightarrow\infty}\lambda_{\min}^{\Sigma}I_{(0,1)}(c)(1-\sqrt{c})^2$.
\end{lemma}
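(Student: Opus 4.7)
The plan is to realize $\B_n$ as a Gaussian sample covariance matrix with population $\T$, rescaled observation-by-observation by radial factors $r_j^2$ that concentrate sharply at $1$, and then to invoke the polynomial large-deviation bounds for the extreme eigenvalues of such Gaussian sample covariance matrices. Writing $\bu_j=\z_j/\|\z_j\|$ with $\z_j\sim N(0,I_p)$ i.i.d., we have $\y_j = \sqrt{p/(\z_j'\T\z_j)}\,\A\z_j$, so
\[
\B_n = \frac{1}{n}\sum_{j=1}^n r_j^2\,\A\z_j\z_j'\A',\qquad r_j^2 = \frac{p}{\z_j'\T\z_j}.
\]
Let $\widetilde\bS_n^\star = (1/n)\sum_j \A\z_j\z_j'\A'$ be the (un-normalized) Gaussian sample covariance with population $\T$. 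On the event $E_\delta=\{\max_{j\le n} |r_j^2-1|\le\delta\}$ we have the matrix sandwich
\[
(1-\delta)\widetilde\bS_n^\star \preceq \B_n \preceq (1+\delta)\widetilde\bS_n^\star,
\]
so the proof reduces to (i) a polynomial upper bound on $P(E_\delta^c)$, and (ii) the corresponding extreme-eigenvalue tail bounds for $\widetilde\bS_n^\star$.

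For (i), $\z_j'\T\z_j$ is a quadratic form in a standard Gaussian vector with mean $\tr(\T)=p$ and variance $2\tr(\T^2)=O(p)$, so the Hanson-Wright inequality yields
\[
P\!\left(|\z_j'\T\z_j-p|>\delta p\right)\le 2\exp(-c\delta^2 p),
\]
with $c>0$ depending only on $\|\T\|$. A union bound over $j\le n$ and the regime $p/n\to c>0$ give $P(E_\delta^c)=o(n^{-s})$ for every $s>0$.

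For (ii), $\widetilde\bS_n^\star$ is a Gaussian sample covariance with bounded population $\T$, for which the polynomial (in fact exponential) concentration of $\lambda_{\max}$ around $\|\T\|(1+\sqrt c)^2$, and of $\lambda_{\min}$ around $\lambda_{\min}^{\T}(1-\sqrt c)^2$ when $c<1$ and $\lambda_{\min}^{\T}$ is bounded away from $0$, is classical (see Theorem~5.9 and Theorem~5.11 of \cite{BSbook}, specialized to Gaussian entries). Combine: by Lemma~\ref{mom-y}, $\|\Sigma-\T\|\to 0$, so $\limsup_p\|\Sigma\|=\limsup_p\|\T\|$ and likewise for $\lambda_{\min}$; thus one can pick $\delta>0$ small enough that $\eta_r/(1+\delta)>\limsup_p\|\T\|(1+\sqrt c)^2$ and $\eta_l/(1-\delta)<\liminf_p\lambda_{\min}^{\T}(1-\sqrt c)^2$. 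On $E_\delta$,
\[
P(\|\B_n\|>\eta_r) \le P\!\left(\|\widetilde\bS_n^\star\|>\tfrac{\eta_r}{1+\delta}\right)+P(E_\delta^c) = o(n^{-s}),
\]
and the smallest-eigenvalue statement follows by the symmetric argument.

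The main obstacle is the clean invocation of the polynomial tail for $\widetilde\bS_n^\star$ in the \emph{general} bounded-$\T$ Gaussian case (not just $\T=I$); the Bai-Silverstein truncation-plus-trace-moment proof carries over, and in the Gaussian setup one even obtains an exponential rate that trivially dominates any $o(n^{-s})$. All other ingredients (Hanson-Wright, the union bound, the sandwich inequality, and translating from $\T$ to $\Sigma$ via Lemma~\ref{mom-y}) are routine.
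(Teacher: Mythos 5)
Your proof is correct and follows essentially the same route as the paper's: both realize $\B_n$ as a radially rescaled Gaussian sample covariance matrix with population $\T$, sandwich its extreme eigenvalues between those of $\widetilde\bS{}_n^\star=(1/n)\sum_j\A\z_j\z_j'\A'$ times the extreme radial factors, control those factors by a union bound plus concentration of $\z_j'\T\z_j/p$ around its mean, and then invoke the Bai--Silverstein $o(n^{-s})$ tail bounds for the Gaussian SCM together with Lemma~\ref{mom-y} to translate thresholds between $\T$ and $\Sigma$. The only cosmetic differences are that you use Hanson--Wright where the paper uses a high-moment Chebyshev/Markov bound (both give far more than the required polynomial rate), and that the paper keeps the normalization $r_1=\tr(\T)/p$ explicit rather than taking $\tr(\T)=p$.
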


\begin{proof}
	Let $\bx_j=\A\z_j$ where $\A\A'=\T$ and $\z_j\sim N(0, I_p)$, $j=1,\ldots,n.$ Also let $\B_n^{(0)}=(1/n)\sum_{j=1}^n\A\z_j\z_j'\A'$.
	From \cite{BS04}, the conclusions of this lemma hold when $(\B_n, \Sigma)$ are replaced with $(\B_n^{(0)}, \T)$.
	Choose $\eta_r^{(0)}$ and $\eta_l^{(0)}$ satisfying
	$$
	\eta_l<r_1^{-1}\eta_l^{(0)}<\lim\inf_{p\rightarrow\infty}\lambda_{\min}^{\Sigma}I_{(0,1)}(c)(1-\sqrt{c})^2\quad \text{and}\quad \lim\sup_{p\rightarrow\infty} ||\Sigma||(1+\sqrt{c})^2< r_1^{-1}\eta_r^{(0)}<\eta_r,
	$$
	where $r_1=\tr(\T)/p$. From Lemma 1, we have
	$$
	\eta_l^{(0)}<\lim\inf_{p\rightarrow\infty}\lambda_{\min}^{\T}I_{(0,1)}(c)(1-\sqrt{c})^2 \quad \text{and}\quad   \lim\sup_{p\rightarrow\infty} ||\T||(1+\sqrt{c})^2<\eta_r^{(0)}.
	$$
	
	Using inequalities
	$$
	\min_{1\leq j\leq n}\frac{p}{||\A\z_j||^2}\lambda_{\min}^{\B_n^{(0)}}\leq \lambda_{\min}^{\B_n}\leq ||\B_n||\leq\max_{1\leq j\leq n}\frac{p}{||\A\z_j||^2}||\B_n^{(0)}||,
	$$
	we may get
	\begin{eqnarray*}
		P(||\B_n||>\eta_r)&\leq& P\left(||\B_n^{(0)}||>\eta_r^{(0)}\right)+P\left(\max_{1\leq j\leq n}\frac{p}{||\A\z_j||^2}||\B_n^{(0)}||>\eta_r, ||\B_n^{(0)}||\leq \eta_r^{(0)}\right)\\
		&\leq& P\left(\max_{1\leq j\leq n}\frac{p}{||\A\z_j||^2}>\frac{\eta_r}{\eta_r^{(0)}}\right)+o(n^{-s})\\
		&\leq& nP\left(\bigg|\frac{||\A\z_1||^2}{p}-r_1\bigg|>r_1-\frac{\eta_r^{(0)}}{\eta_r}\right)+o(n^{-s}),\\
		&=&o(n^{-s}),
	\end{eqnarray*}
	where the last equality is from the Chebyshev inequality and the fact $r_1>\eta_r^{(0)}/\eta_r$.
	Similarly, $P(\lambda_{\min}^{\B_n}<\eta_l)=o(n^{-s}).$
	
\end{proof}

\end{document}